\providecommand{\U}[1]{\protect\rule{.1in}{.1in}}
\newtheorem{theorem}{Theorem}
\theoremstyle{plain}
\newtheorem{acknowledgement}{Acknowledgement}
\newtheorem{corollary}{Corollary}
\newtheorem{example}{Example}
\newtheorem{lemma}{Lemma}
\newtheorem{problem}{Problem}
\newtheorem{proposition}{Proposition}
\newtheorem{remark}{Remark}
\numberwithin{equation}{section}
\begin{document}
\title[{\normalsize On Divisor Topology of Commutative Rings}]{{\normalsize On Divisor Topology of Commutative Rings}}
\author{U\u{g}ur Y\.{I}\u{g}\.{I}t}
\address{Department of Mathematics, Istanbul Medeniyet University, \"{U}sk\"{u}dar
34700, Istanbul, Turkey.}
\email{ugur.yigit@medeniyet.edu.tr}
\author{Suat Ko\c{c}}
\address{Department of Mathematics, Istanbul Medeniyet University, \"{U}sk\"{u}dar
34700, Istanbul, Turkey.}
\email{suat.koc@medeniyet.edu.tr}
\subjclass[2000]{13A15}
\keywords{divisor topology, valuation domains, Noetherian space}

\begin{abstract}
Let $R\ $be an integral domain and $R^{\#}$ the set of all nonzero nonunits of
$R.\ $For every elements $a,b\in R^{\#},$ we define $a\sim b$ if and only if
$aR=bR,$ that is, $a$ and $b$ are associated elements.\ Suppose that
$EC(R^{\#})$ is the set of all equivalence classes of $R^{\#}\ $according to
$\sim$.$\ $Let $U_{a}=\{[b]\in EC(R^{\#}):b\ $divides $a\}$ for every $a\in
R^{\#}.$\ Then we prove that the family $\{U_{a}\}_{a\in R^{\#}}$ becomes a
basis for a topology on $EC(R^{\#}).\ $This topology is called divisor
topology of $R\ $and denoted by $D(R).\ $We investigate the connections
between the algebraic properties of $R\ $and the topological properties
of$\ D(R)$. In particular, we investigate the seperation axioms on $D(R)$,
first and second countability axioms, connectivity and compactness on $D(R)$.
We prove that for atomic domains $R,\ $the divisor topology $D(R)\ $is a Baire
space. Also, we characterize valution domains $R$ in terms of nested
property of $D(R).$ In the last section, we introduce a new topological proof
of the infinitude of prime elements in a UFD and integers by using the
topology $D(R)$.

\end{abstract}
\maketitle

\section{Introduction}

In the recent history of commutative algebra, many algebraists have created
new topologies on algebraic structures to explore the connections between
general topology and commutative algebra. One of the important topologies is
due to Oscar Zariski. Let $R$ be a commutative ring with nonzero identity and
$Spec(R)$ be the set of all prime ideals of $R.\ $For any $a\in R,$
$D(a)=\{P\in Spec(R):a\notin P\}$.\ Then the family $\{D(a):a\in R\}$ is a
basis for a topology on $Spec(R)$ which is called the Zariski topology of
$R.\ $The Zariski topology allows tools from topology to be used to study
algebraic varieties, and so can be viewed as one of the starting points of
modern algebraic geometry. For more information about the Zariski topology, we
refer \cite{McMoSmi}, \cite{Rinaldi} and \cite{YilErTeKo} to the reader.
Another example of topologies over algebraic structures is the Golomb topology
which was first defined by Golomb on the set of positive integers
\cite{Golomb} and was generalized to the integral domains by Clark et al.
\cite{Clark}. Other topologies over algebraic structures, one may consult
\cite{Ansari}, \cite{Atiyah}, \cite{Ceken}, \cite{Ceken2} and \cite{Ceken3}.

In this paper, we focus only on integral domains. Let $R$ will always denote
such a domain and $R^{\#}$ will denote the set of all nonzero nonunits of
$R.\ $For any two elements $a,b\in R^{\#},\ $we say that $a$ and $b$ are
associated, which we denote this by $a\sim b,$ if whenever $aR=bR.$ This
equivalently means that $a$ and $b$ divide each other. In fact, $\sim$ is an
equivalence relation on $R^{\#}$ and we denote the set of all equivalence
classes on $R^{\#}$ by $EC(R^{\#})=\{\left[  a\right]  :a\in R^{\#}\},\ $where
$\left[  a\right]  $ denotes the equivalence class of $a$ according to $\sim$.
\ For any $a\in R^{\#},$ let
\[
U_{a}=\{\left[  b\right]  \in EC(R^{\#}):\ b\ \text{divides}\ a\}.
\]
We prove in Theorem \ref{tdiv} that the family $\left\{  U_{a}\right\}  _{a\in
R^{\#}}$ is a basis for a topology on $EC(R^{\#})$ which is called the
\textit{divisor topology} of the ring $R$ and denoted by $D(R).\ $If we take
$R$ as the ring of integers, then $D(%
\mathbb{Z}
)$ corresponds to the divisor topology on the integers in the sense of
\cite{Steen}. So our concept is a natural extension of the divisor topology on
the integers to the context of integral domains. The divisor topology
$D(R)\ $of the domain $R$ leads to exploring the connections between the
algebraic properties of the ring $R$ and the topological properties of
$D(R).$\ Our main aim in this paper is to study the topological properties of
$D(R),$ and to characterize some important classes of rings in terms of the
topological properties of $D(R).\ $Among the other results, we investigate the
separation axioms on the divisor topology $D(R)$ (See, Proposition \ref{pT0},
Proposition \ref{pclosure}).\ Also, we determine all irreducible elements of
the ring $R$ in terms of isolated points of $D(R)\ $(See, Proposition
\ref{piso}).\ A domain $R$ with the quotient field $K$ is said to be a
\textit{valuation domain} if for every $0\neq x\in K,\ $either $x\in R$ or
$x^{-1}\in R\ $\cite{Larsen}. 
 topological space $(X,\tau)$ is called a \textit{nested space} if $\tau$ is linearly ordered with respect to
$\subseteq$, that is, every two open sets of $X\ $are comparable
\cite{Richmond}.\ Assume that $(X,\tau)$ is a topological space with the basis
$\mathfrak{B}$.\ First, we prove in Lemma \ref{l1} that $X$ is a nested space
if and only if either $B_{1}\subseteq B_{2}\ $or $B_{2}\subseteq B_{1}\ $for
every $B_{1},B_{2}\in\mathfrak{B}$. In terms of Lemma \ref{l1}, we show that a
domain $R$ is a valuation domain if and only if $D(R)\ $is a nested
topological space. Also, we show that $D(R)$ is both Alexandrov space and
Baire space (See, Proposition \ref{Alexandrov} and Theorem \ref{baire}). Let
$X$ be a topological space and $x\in X.\ $Recall from \cite{Munkres} that
$X\ $is said to have a countable basis at the point $x$ if there is a
countable family $\left\{  N_{k}\right\}  _{k\in%
\mathbb{N}
}$ of neighborhoods of $x$ such that any neighborhood $U\ $of $x$ contains at
least one of the sets $N_{k}.\ $A space $X\ $that has a countable basis at
each of its points is said to \textit{satisfy the first countability axiom}.
Also, a topological space $X\ $is said to \textit{satisfy the second
countability axiom} or \textit{the second countable} if $X\ $has a countable
basis. In Proposition \ref{pfirst} and Proposition \ref{psecond}, we
investigate the countability axioms on the divisor topology $D(R).$ The last
chapter is dedicated to the study of Noetherian space and the divisor
topology. A commutative ring $R\ $not necessarily a domain is said to be a
\textit{Noetherian ring} if every ascending chain
\[
I_{1}\subseteq I_{2}\subseteq\cdots\subseteq I_{n}\subseteq\cdots
\]
of ideals of $R$ terminates. It is well known that a ring $R\ $is a Noetherian
ring if and only if its every ideal is finitely generated if and only if every
family $\mathcal{F}$ of ideals of $R$ has a maximal element (i.e. $R$ has
maximum condition on its ideals) \cite{Larsen}. The class of Noetherian rings
is very important in both commutative rings and algebraic geometry since many
theorems and results depend on the finite generation of ideals of the ring.
Recall from \cite{Hars} that a topological space $X$ is said to be a
\textit{Noetherian space} if it satisfies the descending chain condition for
closed subsets of $X:\ $for every descending chain
\[
K_{1}\supseteq K_{2}\supseteq\cdots\supseteq K_{n}\supseteq\cdots
\]
of closed sets in $X,\ $there exists $n_{0}\in%
\mathbb{N}
$ such that $K_{n_{0}}=K_{n+n_{0}}\ $for every $n\geq1.\ $It is clear that
$X\ $is a Noetherian topological space if and only if it satisfies the
ascending chain condition for open subsets of $X.$\ We prove in Theorem
\ref{tNotherian} that a topological space $X$ with the basis$\mathcal{\ }%
\mathfrak{B\ }$is a Noetherian space if and only if $X\ $satisfies minimum
condition on closed sets if and only if $X\ $satisfies maximum condition on
open sets if and only if any subfamily $\mathfrak{B}^{\prime}$ of
$\mathfrak{B}$ has a maximal element. In terms of this result, we prove that
$D(R)\ $is a Noetherian space if and only if $R\ $is a field (See, Theorem
\ref{tnoe}). In the last section, we use the topology $D(R)$ to introduce a
new topological proof of the infinitude of prime elements in a UFD and
integers (See, Theorem \ref{infty}).

\section{\bigskip Properties of the divisor topology $D(R)$}

Throughout the study, all rings under consideration are assumed to be integral
domain. Let $R$ be an integral domain and $R^{\#}$ the set of all nonzero
nonunits in $R$.$\ $Define the relation $\sim$ on $R^{\#}$ as follows: for
every $a,b\in R^{\#}$, $a\sim b$ if and only if $aR=bR.\ $It is clear that
$\sim$ is an equivalence relation on $R^{\#},\ $and we denote the set of all
equivalence classes on $R^{\#}$ by $EC(R^{\#})=\{\left[  a\right]  :a\in
R^{\#}\},\ $where $\left[  a\right]  $ denotes the equivalence class of $a$
according to $\sim.\ $Also for any $a\in R^{\#},$ we define $U_{a}=\{\left[
b\right]  \in EC(R^{\#}):\ b\ $divides $a\}.$ In order to show that
the$\ $family $\left\{  U_{a}\right\}  _{a\in R^{\#}}$ is a basis for a
topology on $EC(R^{\#})$, we need the following result.

An integral domain $R\ $is called a \textit{GCD-domain} if for any $0\neq
a,b\in R$, $\gcd(a,b)$\ exists. Evidently, $R\ $is a GCD-domain if and
only if $\operatorname{lcm}(a,b)$\ exists for any $0\neq a,b\in R\ $%
\cite{Anderson}. Note that every UFD and Bez\'{o}ut domains are also a
GCD-domain but the converse is generally not true. For instance, the ring of
all algebraic integers $\mathcal{O}$ is a Bez\'{o}ut domain, so is a
GCD-domain which is not a UFD. Also, the polynomial ring $%
\mathbb{Z}
\lbrack X]$ is a GCD-domain which is not a Bez\'{o}ut domain.

\begin{proposition}
\label{p1}Assume that $R$ is an integral domain. The following statements are satisfied.

(i) For any $a\in R^{\#},$ we have$\ [a]\in U_{a}.$

(ii) For any $a,b\in R^{\#},\ a$ divides $b$ if and only if $U_{a}\subseteq
U_{b}.$

(iii) $%
{\textstyle\bigcup\limits_{a\in R^{\#}}}
U_{a}=EC(R^{\#}).$

(iv)\ If $[x]\in U_{a}\cap U_{b}$ for some $a,b\in R^{\#},\ $then $[x]\in
U_{x}\subseteq U_{a}\cap U_{b}.$

(v)\ If $R\ $is a GCD-domain, then $U_{a}\cap U_{b}=\emptyset$ or $U_{a}\cap
U_{b}=U_{\gcd(a,b)}$ for all $a,b\in R^{\#}$.

(vi) Assume that $R$ is a GCD-domain. If $U_{a}\subseteq U_{c}$ and $U_{b}\subseteq
U_{c}\ $for some $a,b,c\in R^{\#},$ then $U_{\operatorname{lcm}(a,b)}\subseteq
U_{c}. $
\end{proposition}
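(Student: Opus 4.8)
The plan is to reduce everything to the divisibility characterization established in part (ii), namely that $U_x \subseteq U_y$ if and only if $x$ divides $y$. Under this dictionary, the hypotheses $U_a \subseteq U_c$ and $U_b \subseteq U_c$ translate to: $a \mid c$ and $b \mid c$. The desired conclusion $U_{\operatorname{lcm}(a,b)} \subseteq U_c$ translates to: $\operatorname{lcm}(a,b) \mid c$. So the entire topological statement collapses to the purely algebraic fact that in a GCD-domain, a common multiple of $a$ and $b$ is divisible by their least common multiple. First I would note that $\operatorname{lcm}(a,b)$ exists in $R$ precisely because $R$ is a GCD-domain (this is the standard equivalence cited from \cite{Anderson} right before the proposition), so $U_{\operatorname{lcm}(a,b)}$ is well defined as a basis element.

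The main step is then to invoke the defining/universal property of the lcm: if $a \mid c$ and $b \mid c$, then $\operatorname{lcm}(a,b) \mid c$. In a GCD-domain one can see this directly: write $d = \gcd(a,b)$ and recall $ab \sim d \cdot \operatorname{lcm}(a,b)$, or argue more elementarily that the set of common multiples of $a$ and $b$ is exactly the set of multiples of $\operatorname{lcm}(a,b)$ — this is essentially the definition of least common multiple once existence is granted. Applying part (ii) three times (twice to unpack the hypotheses, once to repackage the conclusion) finishes the argument. Concretely: from $U_a \subseteq U_c$ get $a \mid c$; from $U_b \subseteq U_c$ get $b \mid c$; hence $c$ is a common multiple of $a$ and $b$, so $\operatorname{lcm}(a,b) \mid c$; now apply (ii) in the reverse direction to conclude $U_{\operatorname{lcm}(a,b)} \subseteq U_c$.

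I do not anticipate a genuine obstacle here — the statement is essentially a translation exercise — but the one point deserving care is making sure $c \in R^{\#}$ and that $\operatorname{lcm}(a,b) \in R^{\#}$ so that part (ii), which is stated for elements of $R^{\#}$, actually applies. Since $a \mid c$ with $a$ a nonunit forces $c$ to be a nonunit, and $c \neq 0$ by hypothesis, we have $c \in R^{\#}$; similarly $\operatorname{lcm}(a,b)$ is a nonzero nonunit because it is a nonzero multiple of the nonunit $a$. With these membership checks dispatched, the proof is immediate from (ii) and the universal property of the lcm, and it would run only a couple of lines.
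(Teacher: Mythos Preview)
Your proposal is correct and follows exactly the same route as the paper: translate the hypotheses via part (ii) into $a\mid c$ and $b\mid c$, invoke the universal property of $\operatorname{lcm}(a,b)$ to get $\operatorname{lcm}(a,b)\mid c$, and translate back via (ii). The paper's proof is a two-line version of what you wrote; your extra checks that $\operatorname{lcm}(a,b)$ exists and lies in $R^{\#}$ are reasonable hygiene but are left implicit in the paper.
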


\begin{proof}
$(i),(ii),(iii):$ Clear.

$(iv):\ $Follows from $(i)\ $and $(ii).$

$(v):\ $The intersection of $U_{a}$ and $U_{b}$ can be empty (actually, this
occurs in the case that $a$ and $b$ are relatively prime). Otherwise, the
inclusion $U_{\gcd(a,b)}\subseteq U_{a}\cap U_{b}$ follows from $(ii)\ $since
$\gcd(a,b)$ divides $a$ and $b.\ $Let $[x]\in U_{a}\cap U_{b}$. Then
$x\ $divides $a$ and $x$ divides $b$. This implies that $x\ $divides
$\gcd(a,b).\ $Then by $(i)\ $and $(ii)$,\ $[x]\in U_{x}\subseteq U_{\gcd
(a,b)}.\ $This completes the proof.

$(vi):\ $Suppose that $U_{a}\subseteq U_{c}$ and $U_{b}\subseteq U_{c}\ $for
some $a,b,c\in R^{\#}.\ $Then by $(ii),\ a$ divides $c$ and $b$ divides
$c.\ $This implies that $\operatorname{lcm}(a,b)\ $divides $c$.\ Then by
$(ii),\ $we have $U_{\operatorname{lcm}(a,b)}\subseteq U_{c}.$
\end{proof}

\begin{theorem}
\label{tdiv}Assume that $R\ $is an integral domain. The family $\left\{
U_{a}\right\}  _{a\in R^{\#}}$ is a basis for a topology on $EC(R^{\#}).$ In
this case, this topology is called the divisor topology of $R$ and denoted by
$D(R).$
\end{theorem}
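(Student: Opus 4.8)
The plan is to verify the standard basis axioms directly, using the properties already established in Proposition \ref{p1}. Recall that a family $\mathcal{B}$ of subsets of a set $X$ forms a basis for a topology on $X$ precisely when two conditions hold: first, $\mathcal{B}$ covers $X$, i.e. every point of $X$ lies in some member of $\mathcal{B}$; and second, whenever a point $x$ lies in the intersection $B_1 \cap B_2$ of two basis elements, there is a basis element $B_3$ with $x \in B_3 \subseteq B_1 \cap B_2$.

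First I would check the covering condition. This is exactly part $(iii)$ of Proposition \ref{p1}, which asserts $\bigcup_{a \in R^{\#}} U_a = EC(R^{\#})$; alternatively it follows pointwise from part $(i)$, since each class $[a] \in EC(R^{\#})$ satisfies $[a] \in U_a$. Either way the set $EC(R^{\#})$ is covered by the family $\{U_a\}_{a \in R^{\#}}$, so the first axiom holds.

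Next I would verify the intersection condition. Suppose $[x] \in U_a \cap U_b$ for some $a, b \in R^{\#}$. I need a basis element sitting between $[x]$ and $U_a \cap U_b$. The natural candidate is $U_x$ itself: by part $(i)$ we have $[x] \in U_x$, and by part $(iv)$ of Proposition \ref{p1} we get $[x] \in U_x \subseteq U_a \cap U_b$. Thus $U_x$ is the required basis element, and the second axiom holds as well. Since both conditions are satisfied, the family $\{U_a\}_{a \in R^{\#}}$ is a basis for a topology on $EC(R^{\#})$, which we name the divisor topology $D(R)$.

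There is essentially no obstacle here: the entire content has been front-loaded into Proposition \ref{p1}, so the proof of the theorem is a two-line invocation of parts $(i)$, $(iii)$, and $(iv)$ of that proposition. The only point worth a word of care is that, unlike in a GCD-domain where the intersection of two basis elements is again a basis element (part $(v)$), in a general domain $U_a \cap U_b$ need not itself be of the form $U_c$; this is precisely why one states the basis axiom in the weaker ``find a $B_3$ in between'' form rather than requiring closure under finite intersection, and part $(iv)$ is tailored to supply exactly that $B_3$.
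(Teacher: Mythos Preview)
Your proposal is correct and matches the paper's own proof, which is simply the one-line remark ``Follows from Proposition \ref{p1} $(iii)$ and $(iv)$.'' You have merely spelled out the two basis axioms explicitly and added a helpful remark about why part $(v)$ is not needed in the general (non-GCD) case.
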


\begin{proof}
Follows from Proposition \ref{p1} $(iii)$ and $(iv)$.
\end{proof}

\begin{example}
Consider the ring of integers $%
\mathbb{Z}
.\ $Then $%
\mathbb{Z}
^{\#}=\{\pm n:n\in%
\mathbb{N}
$ and $n\geq2\}$ and $EC(%
\mathbb{Z}
^{\#})=\{[n]:n\in%
\mathbb{N}
\ $and $\geq2\}.$\ For any $n\geq2,\ $we have $U_{n}=\{[x]\in EC(%
\mathbb{Z}
^{\#}):$ $x$ is a positive divisor of $n\}.\ $In fact, this topology is the
divisor topology on the integers (See, \cite{Steen}).
\end{example}

Another nontrivial example is given in Example \ref{Ex2}.

\begin{remark}
\label{remark}Assume that $R$ is an integral domain. Then $R$ is a field if and only
if $R^{\#}\ $is empty set if and only if $EC(R^{\#})$ is empty set. In this
case, $D(R)\ $is empty space. So from now on, we assume that all integral
domains are not field otherwise stated.
\end{remark}

Recall from \cite{Munkres} that a topological space $X$ is called a
$T_{0}$\textit{-space} if for every two distinct points $x,y\in X,\ $there
exists an open set $O$ containing $x$ but not containing $y$ (or containing
$y$ but not containing $x).\ $

\begin{proposition}
\label{pT0}Assume that $R$ is an integral domain. Then $D(R)$ is a $T_{0}$-space.
\end{proposition}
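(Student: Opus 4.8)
The plan is to show that any two distinct classes $[a], [b] \in EC(R^{\#})$ can be separated in the sense of $T_0$. The key observation is that since $[a] \neq [b]$, the elements $a$ and $b$ are not associated, so at least one of them fails to divide the other. Without loss of generality, suppose $b$ does not divide $a$.

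First I would consider the basic open set $U_a$. By Proposition \ref{p1}(i), we have $[a] \in U_a$, so $U_a$ is an open set containing $[a]$. Next I would check that $[b] \notin U_a$: by the very definition of $U_a = \{[x] \in EC(R^{\#}) : x \text{ divides } a\}$, membership $[b] \in U_a$ would force $b$ to divide $a$, contradicting our assumption. (One should note here that this is well-defined on equivalence classes: if $b \sim b'$ then $b$ divides $a$ iff $b'$ divides $a$, since associated elements divide exactly the same elements.) Hence $U_a$ is an open set containing $[a]$ but not $[b]$, which is precisely what the $T_0$ axiom requires.

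The only subtlety — and it is minor — is the case distinction: if instead $a$ does not divide $b$, the same argument with the roles of $a$ and $b$ swapped produces the open set $U_b$ containing $[b]$ but not $[a]$. Since $[a] \neq [b]$ guarantees at least one of the two non-divisibility conditions holds, one of these two cases always applies. I do not anticipate any real obstacle here; the statement is essentially immediate from Proposition \ref{p1}(i)–(ii) and the definition of the equivalence relation $\sim$, which encodes exactly the failure of mutual divisibility for distinct classes.
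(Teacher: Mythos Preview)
Your proof is correct and follows essentially the same approach as the paper: both argue that $[a]\neq[b]$ forces a failure of divisibility in at least one direction, and then exhibit the corresponding basic open set $U_a$ (or $U_b$) separating the two points via Proposition~\ref{p1}(i)--(ii). The only cosmetic difference is which direction of non-divisibility is taken without loss of generality.
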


\begin{proof}
Let $[a]\neq\lbrack b]$ in $EC(R^{\#}).\ $Then $a$ does not divide $b$ or $b$
does not divide $a.\ $Otherwise, we would have $[a]=[b].\ $We assume, without losing generality, that $a$ does not divide $b$. Then by Proposition
\ref{p1} $(i)\ $and $(ii),\ $we have $[a]\notin U_{b}$ and $[b]\in U_{b}.$
\end{proof}

Let $X\ $be a topological space and $x\in X.\ $Recall from \cite{Munkres} that
$x$ is said to be an \textit{isolated point} if $\{x\}\ $is open in
$X.\ $

\begin{proposition}
\label{piso}Assume that $R$ is an integral domain. Then $[a]\in EC(R^{\#})\ $is an
isolated point if and only if $a$ is an irreducible element of $R.\ $Especially, if $R\ $is a GCD-domain, then $[a]\in EC(R^{\#})\ $is an isolated
point if and only if $a$ is a prime element of $R.$
\end{proposition}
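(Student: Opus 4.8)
The plan is to characterize isolated points $[a]$ directly from the basis $\{U_b\}_{b\in R^\#}$. Since $\{[a]\}$ is open exactly when it is a union of basic open sets, and the smallest basic open set containing $[a]$ is $U_a$ itself (by Proposition~\ref{p1}$(i)$ and $(ii)$, $[a]\in U_a$ and $U_a\subseteq U_b$ whenever $[a]\in U_b$), the point $[a]$ is isolated if and only if $U_a=\{[a]\}$. So the whole problem reduces to: when does $U_a=\{[a]\}$, i.e. when are the only divisors of $a$ (up to associates) the unit and $a$ itself?

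First I would prove the first equivalence. If $a$ is irreducible, then any $b\in R^\#$ dividing $a$ must be an associate of $a$ (since a proper factorization $a=bc$ would force $b$ or $c$ to be a unit, and $b$ is a nonunit, so $c$ is a unit and $b\sim a$); hence $U_a=\{[a]\}$ and $[a]$ is isolated. Conversely, if $a$ is not irreducible, write $a=bc$ with $b,c$ both nonunits; then $b\in R^\#$, $b$ divides $a$, and $[b]\neq[a]$ (otherwise $c$ would be a unit), so $[b]\in U_a\setminus\{[a]\}$, showing $\{[a]\}$ is not open. One small point to be careful about: $a\in R^\#$ means $a$ is a nonzero nonunit, so "irreducible" here is used in its usual sense, and the divisors $b$ considered in $U_a$ are automatically nonzero nonunits, so no edge cases with $0$ or units intrude.

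For the GCD-domain refinement, I would invoke the standard fact that in a GCD-domain every irreducible element is prime (if an irreducible $p$ divides $xy$, using $\gcd(p,x)\in\{[1],[p]\}$ up to units one gets $p\mid x$ or $p\mid y$). Combined with the trivial implication that prime elements are always irreducible, we get that in a GCD-domain "$a$ irreducible" $\iff$ "$a$ prime", and the second statement follows from the first. I would state this fact with a reference to \cite{Anderson} rather than reprove it.

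The approach is essentially routine; the only genuine content is recognizing that $U_a$ is the minimal open neighborhood of $[a]$, which is exactly the Alexandrov-type property already implicit in Proposition~\ref{p1}. I do not anticipate a real obstacle here — the main thing is to state the argument cleanly and cite the GCD-domain$\Rightarrow$(irreducible $=$ prime) fact rather than belabor it.
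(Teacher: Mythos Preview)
Your proposal is correct and follows essentially the same approach as the paper: both arguments reduce the question to showing $U_a=\{[a]\}$ if and only if $a$ is irreducible, and then invoke the equality of irreducible and prime elements in a GCD-domain. Your observation that $U_a$ is the minimal open neighborhood of $[a]$ is exactly what the paper records separately as Lemma~\ref{l2}; the paper's $(\Rightarrow)$ direction reaches the same conclusion $U_a=\{[a]\}$ by a slightly more ad hoc argument with the basis, but the content is identical.
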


\begin{proof}
$\left(  \Rightarrow\right)  :\ $Suppose that $[a]\in EC(R^{\#})$ is an
isolated point. Then there exists an open set $O=%
{\textstyle\bigcup\limits_{x\in\Delta}}
U_{x},\ $where $\Delta$ is an arbitrary subset of $R^{\#},\ $such that
\[
\left\{  [a]\right\}  =O=%
{\textstyle\bigcup\limits_{x\in\Delta}}
U_{x}.\
\]
Since $\left\{  [a]\right\}  \ $is singleton, we have $\left\vert
\Delta\right\vert =1,$ that is, $\left\{  [a]\right\}  =U_{x}\ $for some $x\in
R^{\#}.\ $Since $[x]\in U_{x},\ $we have $[a]=[x]\ $which implies that
$a,x\ $are associated elements. This implies that $U_{a}=U_{x}.\ $Choose a
divisor $t$ of $a$ which is not unit. Then $[t]\in U_{a}=\left\{  [a]\right\}
.\ $Thus we conclude that $[t]=[a],\ $that is, $t\ $and $a$ are associated
elements. Hence, $a$ is an irreducible element.

$\left(  \Leftarrow\right)  :$ Now, assume that $a$ is an irreducible element
of $R.\ $Let $[x]\in U_{a}\ $for some $x\in R^{\#}.\ $Then $x\ $divides
$a.\ $As $a$ is an irreducible element and $x$ is not unit, we have $x$ and
$a$ are associated. This implies that $[a]=[x],\ $that is, $U_{a}=\{[a]\}$ is
an open set. Hence, $[a]$ is an isolated point.

Since in a GCD-domain prime elements and irreducible elements are equal, the
rest is clear.
\end{proof}

Assume that $R\ $is an integral domain and $K\ $its quotient field. Recall from
\cite{Larsen} that $R\ $is said to be a \textit{valuation domain} if for each
$x\in K,\ $either $x\in R$ or $x^{-1}\in R.\ $By \cite[Proposition
5.2]{Larsen}, we know that an integral domain $R\ $is a valuation domain if
and only if for each $a,b\in R,\ $either $aR\subseteq bR$ or $bR\subseteq
aR,\ $equivalently, $a$ divides $b$ or $b$ divides $a.\ $

A topological space $(X,\tau)$ is called a \textit{nested space} if $\tau$
is linearly ordered with respect to $\subseteq$ \cite{Richmond}.$\ $It is
clear that $(X,\tau)$ is a nested space if and only if every two open sets of
$X\ $are comparable, that is, $O_{1}\subseteq O_{2}\ $or $O_{2}\subseteq
O_{1}$ for every $O_{1},O_{2}\in\tau.\ $

\begin{lemma}
\label{l1}Assume that $(X,\tau)$ is a topological space with basis $\mathfrak{B}%
$.\ Then $X\ $is a nested space if and only if either $B_{1}\subseteq B_{2}%
\ $or $B_{2}\subseteq B_{1}\ $for every $B_{1},B_{2}\in\mathfrak{B}$.
\end{lemma}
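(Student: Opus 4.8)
The plan is to prove both directions of the biconditional, with the forward direction being essentially trivial and the reverse direction requiring a short argument that reduces an arbitrary pair of open sets to the basic open sets that generate them.

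First I would handle the forward implication. Suppose $X$ is a nested space, so that $\tau$ is linearly ordered under $\subseteq$. Since $\mathfrak{B}\subseteq\tau$, any two basis elements $B_1,B_2\in\mathfrak{B}$ are in particular two open sets, hence comparable; so $B_1\subseteq B_2$ or $B_2\subseteq B_1$. This direction needs only one sentence.

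For the reverse implication, assume that every two members of $\mathfrak{B}$ are comparable, and let $O_1,O_2\in\tau$ be arbitrary open sets. I want to show $O_1\subseteq O_2$ or $O_2\subseteq O_1$. The natural approach is by contradiction: suppose neither inclusion holds, so there exist points $x\in O_1\setminus O_2$ and $y\in O_2\setminus O_1$. Since $O_1$ is open and $x\in O_1$, pick a basis element $B_1$ with $x\in B_1\subseteq O_1$; similarly pick $B_2$ with $y\in B_2\subseteq O_2$. By hypothesis $B_1$ and $B_2$ are comparable; say $B_1\subseteq B_2$ (the other case is symmetric). Then $x\in B_1\subseteq B_2\subseteq O_2$, contradicting $x\notin O_2$. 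Hence the two open sets are comparable, and $X$ is nested.

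The only point requiring a little care is making sure the contradiction is set up symmetrically: whichever way the containment $B_1\subseteq B_2$ or $B_2\subseteq B_1$ goes, one of the two chosen witness points lands in the wrong set. I expect no real obstacle here — the lemma is a routine fact about bases — so the main thing is simply to present the contrapositive/contradiction cleanly and note the symmetry rather than writing out both cases in full. One could alternatively give a direct proof by writing $O_1=\bigcup_{i}B_i$ and $O_2=\bigcup_{j}B_j'$ as unions of basis elements and arguing about the totally ordered family of all these pieces, but the pointwise contradiction argument is shorter and cleaner.
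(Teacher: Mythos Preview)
Your proof is correct. The forward direction matches the paper (which just says ``it is obvious''), and your reverse direction is a valid pointwise contradiction argument.

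The paper's reverse direction takes the alternative route you mention at the end: it writes $U=\bigcup_{u\in\Delta_1}B_u$ and $V=\bigcup_{v\in\Delta_2}B_v$, assumes $V\not\subseteq U$, picks a single $B_v$ with $B_v\not\subseteq U$, and then observes that $B_v\not\subseteq B_u$ for every $u\in\Delta_1$ forces $B_u\subseteq B_v$ for every $u$, whence $U\subseteq B_v\subseteq V$. So the paper argues directly (one failed inclusion implies the other holds) at the level of basis pieces, while you argue by contradiction at the level of witness points. Both are equally short and elementary; your version has the mild advantage of not needing to index the full decomposition of each open set, and the paper's version has the mild advantage of being a direct proof rather than a proof by contradiction.
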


\begin{proof}
$\left(  \Rightarrow\right)  :\ $It is obvious.\newline$\left(  \Leftarrow
\right)  :\ $Let $U,V\in\tau$. Then we can write $U=%
{\textstyle\bigcup\limits_{u\in\Delta_{1}}}
B_{u}$ and $V=%
{\textstyle\bigcup\limits_{v\in\Delta_{2}}}
B_{v}$ where $B_{u},B_{v}\in\mathfrak{B}$ and $\Delta_{1},\Delta_{2}$ are
arbitrary index sets. Assume that $V\nsubseteq U.\ $Then for some $v\in
\Delta_{2},\ B_{v}\nsubseteq U.\ $This implies that $B_{v}\nsubseteq B_{u}%
\ $for each $u\in\Delta_{1}.\ $By the assumption, we have $B_{u}\subseteq
B_{v}$ for each $u\in\Delta_{1}.\ $Thus, we conclude that $%
{\textstyle\bigcup\limits_{u\in\Delta_{1}}}
B_{u}=U\subseteq B_{v}\subseteq V.\ $Hence, $X\ $is a nested space.
\end{proof}

\begin{theorem}
\label{tnested}Assume that $R\ $is an integral domain. Then $R\ $is a valuation domain
if and only if $D(R)\ $is a nested topological space.
\end{theorem}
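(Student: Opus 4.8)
The plan is to prove both directions by combining the algebraic characterization of valuation domains recalled just above with Lemma \ref{l1}. The key observation is that the basis of $D(R)$ is exactly $\{U_a\}_{a\in R^\#}$, so by Lemma \ref{l1} the space $D(R)$ is nested if and only if for every $a,b\in R^\#$ the basic open sets $U_a$ and $U_b$ are comparable under inclusion. So the whole statement reduces to the equivalence: $R$ is a valuation domain $\iff$ $U_a\subseteq U_b$ or $U_b\subseteq U_a$ for all $a,b\in R^\#$. Then Proposition \ref{p1}(ii) translates inclusion of basic open sets into divisibility: $U_a\subseteq U_b$ iff $a$ divides $b$. Hence the condition on the basis becomes precisely "for all $a,b\in R^\#$, $a\mid b$ or $b\mid a$", which by \cite[Proposition 5.2]{Larsen} (as recalled in the paragraph preceding Lemma \ref{l1}) is exactly the statement that $R$ is a valuation domain.

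For the forward direction, I would assume $R$ is a valuation domain and let $a,b\in R^\#$ be arbitrary. By the cited characterization, $a$ divides $b$ or $b$ divides $a$; Proposition \ref{p1}(ii) then gives $U_a\subseteq U_b$ or $U_b\subseteq U_a$. Since $\{U_a\}$ is a basis for $D(R)$, Lemma \ref{l1} yields that $D(R)$ is nested. For the converse, assume $D(R)$ is nested. By Lemma \ref{l1} any two basic open sets $U_a,U_b$ are comparable, so $U_a\subseteq U_b$ or $U_b\subseteq U_a$, and Proposition \ref{p1}(ii) gives that $a\mid b$ or $b\mid a$ for all $a,b\in R^\#$. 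To conclude that $R$ is a valuation domain via \cite[Proposition 5.2]{Larsen} I must upgrade "for all $a,b\in R^\#$" to "for all $a,b\in R$", which is the one small gap to close: if one of $a,b$ is zero the divisibility $0\mid $ nothing nonzero fails, but the relevant comparability $aR\subseteq bR$ or $bR\subseteq aR$ still holds trivially (every ideal contains $0R=\{0\}$), and if one of $a,b$ is a unit then its principal ideal is all of $R$ and again contains the other. So the only substantive case is $a,b\in R^\#$, which is handled.

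The main (and only) obstacle is this boundary bookkeeping between $R^\#$ and $R$: the basis of $D(R)$ only "sees" nonzero nonunits, while the valuation-domain criterion quantifies over all ring elements. I would therefore phrase the converse carefully, dispatching the cases where $a$ or $b$ is $0$ or a unit by hand (comparability of the corresponding principal ideals is immediate) and invoking the nested property only for the remaining case $a,b\in R^\#$. Since the whole argument is essentially a dictionary between three already-established equivalences, the proof is short; no computation beyond these case checks is needed.

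\begin{proof}
Recall that $\left\{ U_{a}\right\} _{a\in R^{\#}}$ is a basis for $D(R)$ by Theorem \ref{tdiv}. By Lemma \ref{l1}, $D(R)$ is a nested space if and only if $U_{a}\subseteq U_{b}$ or $U_{b}\subseteq U_{a}$ for every $a,b\in R^{\#}$. By Proposition \ref{p1} $(ii)$, this holds if and only if, for every $a,b\in R^{\#}$, $a$ divides $b$ or $b$ divides $a$.

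$\left( \Rightarrow\right) :$ Suppose $R$ is a valuation domain. Then by \cite[Proposition 5.2]{Larsen}, for every $a,b\in R$ we have $aR\subseteq bR$ or $bR\subseteq aR$; in particular this holds for all $a,b\in R^{\#}$, which means $a$ divides $b$ or $b$ divides $a$. By the discussion above, $D(R)$ is a nested space.

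$\left( \Leftarrow\right) :$ Suppose $D(R)$ is a nested space. By the discussion above, for every $a,b\in R^{\#}$ we have $a$ divides $b$ or $b$ divides $a$, equivalently $aR\subseteq bR$ or $bR\subseteq aR$. Now let $a,b\in R$ be arbitrary. If $a=0$ or $b=0$, then one of $aR,bR$ equals $\{0\}$, which is contained in the other, so $aR\subseteq bR$ or $bR\subseteq aR$. If $a$ is a unit or $b$ is a unit, then one of $aR,bR$ equals $R$, which contains the other, so again $aR\subseteq bR$ or $bR\subseteq aR$. In the remaining case $a,b\in R^{\#}$, comparability holds by the previous sentence. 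Hence for every $a,b\in R$, $aR\subseteq bR$ or $bR\subseteq aR$, so by \cite[Proposition 5.2]{Larsen}, $R$ is a valuation domain.
\end{proof}
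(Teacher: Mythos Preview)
Your proof is correct and follows essentially the same route as the paper: both directions go through Lemma \ref{l1}, Proposition \ref{p1}(ii), and \cite[Proposition 5.2]{Larsen}. Your version is in fact slightly more careful than the paper's, since you explicitly dispatch the cases $a=0$, $b=0$, and $a$ or $b$ a unit in the converse direction, whereas the paper tacitly passes from comparability on $R^{\#}$ to the valuation-domain criterion on all of $R$.
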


\begin{proof}
Assume that $R\ $is a valuation domain. We will show that $D(R)$ is a
nested space. Choose two basis element $U_{a}$ and $U_{b}\ $for some $a,b\in
R^{\#}.\ $Because $R\ $is a valuation domain, by \cite[Proposition 5.2]{Larsen},
we have either $a$ divides $b$ or $b$ divides $a.\ $Then by Proposition
\ref{p1}, $U_{a}\subseteq U_{b}\ $or $U_{b}\subseteq U_{a}.\ $By Lemma
\ref{l1}, we have that $D(R)\ $is a nested space. For the converse, assume
that $D(R)\ $is a nested space. Let $a,b$ be two nonzero nonunit elements of
$R.\ $Then by assumption, $U_{a}\subseteq U_{b}$ or $U_{b}\subseteq U_{a}%
.\ $By Proposition \ref{p1}, we have either $a$ divides $b$\ or $b$ divides
$a.\ $Then by \cite[Proposition 5.2]{Larsen}, $R\ $is a valuation domain.
\end{proof}

\begin{example}
\label{Ex2} Consider the formal power series ring $R=k[[x]]$ over an arbitrary
field $k.$\ We know that $R=k[[x]]\ $is a valuation domain. Also for every
nonzero nonunit $f\in R$ has the form $f=x^{k}g$ for some positive integer
$k\geq1$ and a unit $g$ of $R.\ $Then we have $fR=x^{k}R.\ $Then by
Proposition \ref{p1}, $U_{f}=U_{x^{k}}.\ $Since $x^{k}$ divides $x^{m}\ $for
every positive integer $1\leq k\leq m,\ $again by Proposition \ref{p1},
$U_{x^{k}}\subseteq U_{x^{m}}.\ $Thus the basis $\mathfrak{B}=\{U_{x^{k}%
}\}_{k\in%
\mathbb{N}
}$ is linearly ordered. Then by Lemma \ref{l1}, $D(k[[x]])$ is a nested space.
\end{example}

\begin{lemma}
\label{l2}Let $R\ $be an integral domain and $[a]\in EC(R^{\#}).\ $For every
open set $O$ containing $[a]$ also contains $U_{a}.\ $Thus, $U_{a}\ $is the
smallest open set containing $[a].$
\end{lemma}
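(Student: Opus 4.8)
The plan is to prove Lemma \ref{l2} by a direct argument using the basis description of open sets. First I would take an arbitrary open set $O$ with $[a]\in O$. Since $\{U_x\}_{x\in R^{\#}}$ is a basis for $D(R)$ (Theorem \ref{tdiv}), there is a basic open set $U_x$ with $[a]\in U_x\subseteq O$ for some $x\in R^{\#}$. The membership $[a]\in U_x$ means, by definition of $U_x$, that $a$ divides $x$ (here one must be a little careful: $U_x$ consists of classes $[b]$ with $b$ dividing $x$, and divisibility is well defined on equivalence classes since associated elements have the same divisors).

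The key step is then to invoke Proposition \ref{p1}(ii): since $a$ divides $x$, we get $U_a\subseteq U_x$. Combining the two inclusions yields $U_a\subseteq U_x\subseteq O$, so $O$ contains $U_a$, which is the first assertion. For the ``smallest'' clause, note that $U_a$ is itself an open set containing $[a]$ by Proposition \ref{p1}(i), and we have just shown every open set containing $[a]$ contains $U_a$; hence $U_a$ is the minimum element of the collection of open neighborhoods of $[a]$ ordered by inclusion.

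I do not anticipate a serious obstacle here; the only point requiring mild care is the passage between elements of $R^{\#}$ and their classes in $EC(R^{\#})$ — specifically confirming that ``$b$ divides $x$'' is independent of the representative $b$ of $[b]$, which is immediate from the fact that $b\sim b'$ implies $bR=b'R$ and hence $b\mid x\iff b'\mid x$. This is already implicit in the well-definedness of $U_x$ as a subset of $EC(R^{\#})$, so it can be stated in one line or simply taken for granted. The whole proof is two or three sentences once Proposition \ref{p1}(i),(ii) are in hand.
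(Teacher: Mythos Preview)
Your proposal is correct and follows essentially the same approach as the paper: pick a basic open set $U_x$ with $[a]\in U_x\subseteq O$, read off that $a$ divides $x$, and apply Proposition~\ref{p1}(ii) to get $U_a\subseteq U_x\subseteq O$. The paper's proof is slightly terser (it writes $O$ as a union of basic sets rather than invoking the basis property directly, and leaves the ``smallest'' clause implicit), but the argument is the same.
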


\begin{proof}
Suppose that $O\ $is an open set containing $[a].\ $Then we can write $O=%
{\textstyle\bigcup\limits_{x\in\Delta}}
U_{x},\ $where $\Delta$ is an arbitrary subset of $R^{\#}.$\ Since $[a]\in
O,\ $then there exists $x\in\Delta$ such that $[a]\in U_{x}.\ $This gives
$a\ $divides $x,\ $by Proposition \ref{p1}, $U_{a}\subseteq U_{x}\subseteq
O.\ $Thus, $U_{a}\ $is the smallest open set containing $[a].$
\end{proof}

A topological space $X\ $is called an \textit{Alexandrov space} if the
intersection of arbitrary family of open sets is also open \cite{Alex}. This
is equivalent to every point $x$ of $X\ $has a minimal neighborhood. The
following result is an immediate consequence of Lemma \ref{l2}.

\begin{proposition}
\label{Alexandrov}Suppose that $R\ $is an integral domain. Then $D(R)\ $is an
Alexandrov space.
\end{proposition}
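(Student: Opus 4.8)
The plan is to invoke Lemma \ref{l2} directly, since it already does almost all the work. Recall that a topological space is an Alexandrov space precisely when every point has a minimal neighborhood, equivalently when arbitrary intersections of open sets are open. So the task reduces to producing, for each point of $D(R)=EC(R^{\#})$, a minimal open neighborhood.

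The key step is the following: let $[a]\in EC(R^{\#})$ be an arbitrary point. By Lemma \ref{l2}, the basic open set $U_{a}$ is the smallest open set containing $[a]$, i.e., $U_{a}$ is contained in every open set that contains $[a]$. In particular $U_{a}$ is a minimal (indeed minimum) neighborhood of $[a]$. Since $[a]$ was arbitrary, every point of $D(R)$ has a minimal neighborhood, hence $D(R)$ is an Alexandrov space.

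Alternatively, and equivalently, I could argue directly with the intersection characterization: given an arbitrary family $\{O_{i}\}_{i\in I}$ of open sets of $D(R)$, I want to show $\bigcap_{i\in I}O_{i}$ is open. Take any $[a]\in\bigcap_{i\in I}O_{i}$. Then $[a]\in O_{i}$ for each $i$, so by Lemma \ref{l2} we have $U_{a}\subseteq O_{i}$ for each $i$, whence $U_{a}\subseteq\bigcap_{i\in I}O_{i}$; since also $[a]\in U_{a}$ by Proposition \ref{p1}$(i)$, this exhibits $\bigcap_{i\in I}O_{i}$ as a union of basic open sets, hence open.

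There is no real obstacle here: the whole content was already isolated in Lemma \ref{l2}, and this proposition is explicitly flagged in the text as "an immediate consequence" of that lemma. The only thing to be careful about is stating correctly which equivalent formulation of "Alexandrov space" one is verifying (minimal neighborhoods vs. closure under arbitrary intersections) and citing Lemma \ref{l2} and Proposition \ref{p1}$(i)$ as appropriate. I would write the short two-line proof using the minimal-neighborhood formulation, as it matches the phrasing introduced just before the statement.
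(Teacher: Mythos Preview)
Your proposal is correct and matches the paper's approach exactly: the paper states that this proposition is ``an immediate consequence of Lemma~\ref{l2}'' and gives no further argument, and your proof simply spells out that consequence via the minimal-neighborhood characterization of Alexandrov spaces.
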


Recall from \cite{Munkres} that a topological space $X\ $is called a
$T_{1}$\textit{-space} if for every two distinct points $x,y\in X,\ $then
there exist two open sets $U,V$ such that $x\in U,y\notin U$ and $y\in
V,x\notin V.\ $Note that $X\ $is a $T_{1}$-space if and only if every
singleton $\{x\}$ is closed in $X$. Also, a topological space $X\ $is said to
be a $T_{2}$\textit{-space} (or \textit{Hausdorff}) if for every two distinct
points $x,y\in X,\ $then there exist two disjoint open sets $U,V$ such that
$x\in U$ and $y\in V$ \cite{Munkres}. It is clear that every Hausdorff space
is also $T_{1}$ and the converse is generally not true (See, \cite{Munkres}).

\begin{proposition}
\label{pclosure}Let $R$ be an integral domain and $[a]\in EC(R^{\#}). $\ Then
\[
\ \overline{\left\{  [a]\right\}  }=\{[b]\in EC(R^{\#}):a \ \text{divides}
\ b\}.
\]
Thus, $D(R)$ is not a $T_{1}$-space and not Hausdorff.
\end{proposition}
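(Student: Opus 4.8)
The plan is to compute the closure of $\{[a]\}$ directly using the fact, established in Lemma \ref{l2}, that $U_b$ is the smallest open set containing $[b]$. Recall that $[b]\in\overline{\{[a]\}}$ if and only if every open neighborhood of $[b]$ meets $\{[a]\}$, i.e. contains $[a]$. Since $U_b$ is the smallest open set containing $[b]$, the point $[b]$ lies in the closure of $\{[a]\}$ precisely when $[a]\in U_b$. By Proposition \ref{p1}$(i)$ and the definition of $U_b$, this happens exactly when $a$ divides $b$. This chain of equivalences gives $\overline{\{[a]\}}=\{[b]\in EC(R^{\#}):a\text{ divides }b\}$ in one stroke, with no case analysis required.

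For the second assertion, I would use the standard characterization that $X$ is a $T_1$-space if and only if every singleton is closed (quoted in the paragraph preceding the statement). Since $R$ is assumed not to be a field (Remark \ref{remark}), $R^{\#}$ is nonempty, so pick any $[a]\in EC(R^{\#})$. One must produce an element $b\in R^{\#}$ that $a$ divides but with $[b]\neq[a]$; then $[b]\in\overline{\{[a]\}}\setminus\{[a]\}$, so $\{[a]\}$ is not closed and $D(R)$ fails $T_1$. The natural choice is $b=a^2$: then $a\mid a^2$, and $a^2\in R^{\#}$ since $R$ is a domain (so $a^2\neq 0$ and $a^2$ is a nonunit as $a$ is). Moreover $[a^2]\neq[a]$, because $a^2R=aR$ would force $a=a^2u$ for a unit $u$, hence $1=au$ by cancellation, contradicting that $a$ is a nonunit. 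Thus $\{[a]\}$ is not closed. Finally, since every Hausdorff space is $T_1$, failure of $T_1$ immediately yields failure of Hausdorffness.

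The only point requiring a little care is the non-triviality witness for the failure of $T_1$ — one genuinely needs the standing assumption that $R$ is not a field to guarantee $EC(R^{\#})\neq\emptyset$, and one needs the domain hypothesis both to ensure $a^2\neq 0$ and to run the cancellation argument showing $[a^2]\neq[a]$. Everything else is a direct unwinding of definitions together with Lemma \ref{l2}, so I do not anticipate any real obstacle.
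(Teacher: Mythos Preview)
Your proof is correct and follows essentially the same route as the paper: both compute the closure via the minimal open neighborhood $U_{b}$ from Lemma~\ref{l2}, and both exhibit $[a^{2}]\in\overline{\{[a]\}}\setminus\{[a]\}$ (using cancellation in the domain) to show failure of $T_{1}$ and hence of Hausdorffness. Your presentation packages the two inclusions as a single chain of equivalences, but the substance is identical to the paper's argument.
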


\begin{proof}
Let $[b]\in\overline{\{[a]\}}\ $for some $b\in R^{\#}.\ $For every open set
$O$ containing $[b]$ also contains $[a].\ $Choose $O=U_{b}\ $and note that
$[a]\in U_{b}.\ $Then we have $a$ divides $b.\ $For the converse, assume that
$a$ divides $b$ for some $b\in R^{\#}.$\ Choose an open set $O$ containing
$[b].\ $Then by Lemma \ref{l2}, $U_{b}\subseteq O.\ $Since $a$ divides
$b,\ $by Proposition \ref{p1}, we have $[a]\in U_{a}\subseteq U_{b}\subseteq
O.\ $Hence, we obtain $[b]\in\overline{\{[a]\}}.\ $Consequently, we have
$\overline{\{[a]\}}=\{[b]\in EC(R^{\#}) :a$ divides $b\}.\ $Since $R\ $is not
a field, there exists a nonzero nonunit element $a\in R.\ $This gives
$[a^{2}]\in\overline{\{[a]\}}$ and $[a^{2}]\neq\lbrack a]\ $which implies that
$\ \overline{\{[a]\}}\neq\{[a]\}.\ $Hence, $D(R)$ is not a $T_{1}$-space and
not Hausdorff.
\end{proof}

Recall from \cite{Cohn} that an integral domain $R\ $is called an
\textit{atomic domain} if every nonzero nonunit element $x\in R$ can be
written as a finite product of irreducible elements of $R.\ $Note that in
atomic domain, decomposition of an element into irreducibles need not be
unique, in other words, irreducible element may not be prime. Thus every UFD
is an atomic domain but the converse is not true in general. Also, by
\cite[Proposition 1.1]{Cohn}, an integral domain is an atomic domain if and
only if it satisfies the maximum condition on principal ideals, namely, every
family of principal ideals has a maximal element with respect to
$\subseteq.\ $Thus every Noetherian domain is an example of atomic domain. For
an integral domain $R,\ $we denote the set of all irreducible elements by
$Irr(R)$ and also we denote the set of all equivalence classes of irreducible
elements according to $\sim$ by
\[
EC(Irr(R))=\{[a]:a\in Irr(R)\}.
\]

\begin{theorem}
\label{tdense}Let $R\ $be an atomic domain. $EC(Irr(R))$ is dense in
$D(R)$.\ Also, every open dense set in $D(R)\ $contains $EC(Irr(R)).$
\end{theorem}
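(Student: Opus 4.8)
The plan is to establish the two assertions separately. For density of $EC(Irr(R))$, recall that a subset $D$ of a topological space is dense iff it meets every nonempty basic open set. So it suffices to show $U_a \cap EC(Irr(R)) \neq \emptyset$ for every $a \in R^{\#}$. Since $R$ is atomic, $a$ admits a factorization $a = p_1 p_2 \cdots p_n$ into irreducibles, so $p_1$ is an irreducible element dividing $a$; hence $[p_1] \in U_a$ and $[p_1] \in EC(Irr(R))$, giving the intersection is nonempty. By Proposition~\ref{piso} the irreducible elements are precisely the isolated points, so $EC(Irr(R))$ is exactly the set of isolated points of $D(R)$; this is a clean reformulation worth stating.

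**Next I would** handle the second claim: every open dense set contains $EC(Irr(R))$. This is essentially immediate from the isolated-point characterization. Let $V$ be an open dense subset and let $[p] \in EC(Irr(R))$, so $\{[p]\}$ is open by Proposition~\ref{piso}. Since $V$ is dense, $V$ meets every nonempty open set; in particular $V \cap \{[p]\} \neq \emptyset$, forcing $[p] \in V$. Hence $EC(Irr(R)) \subseteq V$. Note this half does not even need atomicity — only that irreducibles give isolated points — though atomicity is what makes $EC(Irr(R))$ dense in the first place, so both hypotheses pull their weight in the combined statement.

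**The main subtlety** I anticipate is nothing deep but rather making sure the argument for density uses atomicity correctly: without atomicity some $U_a$ might fail to contain any class of an irreducible element (e.g. in a non-atomic domain $a$ need not have any irreducible divisor at all), so the factorization step is exactly where the hypothesis is consumed. A secondary point to state carefully is that we may assume $R$ is not a field (by Remark~\ref{remark}) so that $EC(R^{\#})$ and the basic opens are nonempty, keeping the density statement non-vacuous. Beyond that, the proof is a short two-part argument: density via "every $U_a$ contains an irreducible divisor of $a$," and minimality via "isolated points lie in every dense set."
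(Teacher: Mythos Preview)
Your proof is correct and follows essentially the same approach as the paper. The only cosmetic difference is in the density argument: the paper shows $[a]\in\overline{\{[q_1]\}}\subseteq\overline{EC(Irr(R))}$ using Proposition~\ref{pclosure}, whereas you show each basic open $U_a$ meets $EC(Irr(R))$; these are equivalent formulations of density resting on the same observation that an irreducible factor of $a$ divides $a$, and the second part of your argument is identical to the paper's.
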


\begin{proof}
First, we will show that $\overline{EC(Irr(R))}=EC(R^{\#}).\ $Let $a\in R$ be
an arbitrary nonzero nonunit element. Since $R\ $is atomic, we can write
$a=q_{1}q_{2}\cdots q_{k}\ $for some irreducible elements $q_{1},q_{2}%
,\ldots,q_{k}$ of $R.\ $Then by Proposition \ref{pclosure}, $[a]\in
\overline{\{[q_{1}]\}}\subseteq\overline{EC(Irr(R))}.\ $This implies that
$\overline{EC(Irr(R))}=EC(R^{\#}),$ that is, $EC(Irr(R))$ is dense in
$D(R).$\ Now, take an open dense set $O$ in $D(R).$\ Then $\overline
{O}=EC(R^{\#}).\ $Let $q\in Irr(R).\ $Then $[q]\in$ $EC(R^{\#})=\overline
{O}.\ $Thus for every open set $V\ $containing $[q]$ intersects with
$O.\ $Choose $V=U_{q}.\ $Then by Proposition \ref{piso}, $V=U_{q}=\{[q]\}$.
Thus, we have $U_{q}\cap O\neq\emptyset$ which implies that $[q]\in
O$.\ Hence, we have $EC(Irr(R))\subseteq O.\ $
\end{proof}

Recall from \cite{Munkres} that a topological space $X\ $is called a
\textit{Baire space} if for every countable family $\{A_{n}\}_{n\in%
\mathbb{N}
}$ of closed sets having empty interior in $X$, then $%
{\textstyle\bigcup\limits_{n\in\mathbb{N}}}
A_{n}$ has also empty interior in $X.$ By Lemma \cite[Lemma 48.1]{Munkres},
$X\ $is a Baire space if and only if every countable family $\{O_{n}\}_{n\in%
\mathbb{N}
}$ of open sets in $X,\ $each of which is dense in $X,\ $their intersection $%
{\textstyle\bigcap\limits_{n\in\mathbb{N}}}
O_{n}\ $is also dense in $X.\ $

\begin{theorem}
\label{baire}Let $R\ $be an atomic domain. Then $D(R)\ $is a Baire space.
\end{theorem}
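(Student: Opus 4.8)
The plan is to use the characterization of Baire spaces in terms of dense open sets, combined with the two facts already established: that in an atomic domain $EC(Irr(R))$ is dense in $D(R)$ and that every open dense set contains $EC(Irr(R))$ (Theorem \ref{tdense}). Let $\{O_n\}_{n\in\mathbb{N}}$ be a countable family of open sets, each dense in $D(R)$. By the last assertion of Theorem \ref{tdense}, each $O_n$ contains $EC(Irr(R))$, hence $EC(Irr(R))\subseteq\bigcap_{n\in\mathbb{N}}O_n$. Since $EC(Irr(R))$ is dense in $D(R)$ and the closure operator is monotone, $\bigcap_{n\in\mathbb{N}}O_n$ is dense in $D(R)$ as well. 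By \cite[Lemma 48.1]{Munkres}, $D(R)$ is a Baire space.

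The verification amounts to checking that each $O_n$ really does contain $EC(Irr(R))$: this is exactly the content of the second half of Theorem \ref{tdense}, whose proof uses that for an irreducible element $q$ the minimal neighborhood $U_q$ equals $\{[q]\}$ by Proposition \ref{piso}, so any open set meeting $\overline{O_n}=EC(R^{\#})$ at $[q]$ must actually contain $[q]$. There is no remaining computation once this is cited. One should also note the edge case where $R^{\#}$ is empty, i.e. $R$ is a field, but by Remark \ref{remark} we have agreed that $R$ is not a field, and moreover an atomic domain that is a field has empty $D(R)$, which is vacuously Baire; so nothing is lost.

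I do not expect a genuine obstacle here: the argument is a short two-line deduction from Theorem \ref{tdense} and the dense-$G_\delta$ formulation of the Baire property. The only point worth a sentence of care is that the family is countable — but in fact the argument shows something stronger, namely that the intersection of \emph{any} family of dense open sets is dense, which is unsurprising given that $D(R)$ is an Alexandrov space (Proposition \ref{Alexandrov}); countability plays no essential role. I would state the proof in the clean form above and perhaps remark that atomicity is used only through Theorem \ref{tdense}.
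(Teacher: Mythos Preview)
Your proof is correct and follows essentially the same line as the paper's: both take a countable family of dense open sets, invoke Theorem~\ref{tdense} to get $EC(Irr(R))\subseteq\bigcap_n O_n$, and conclude density of the intersection from the density of $EC(Irr(R))$. Your additional remarks about the field edge case and the irrelevance of countability are accurate extras not present in the paper's version.
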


\begin{proof}
Let $R$ be an atomic domain and $\{O_{n}\}_{n\in%
\mathbb{N}
}$ be a family of open sets in $X$ such that every $O_{n}\ $is dense in
$D(R).\ $Then by Theorem \ref{tdense}, for every $n\in%
\mathbb{N}
,\ $we have $O_{n}\supseteq EC(Irr(R)).\ $Which implies that $%
{\textstyle\bigcap\limits_{n\in\mathbb{N} }}
O_{n}\supseteq EC(Irr(R)).\ $Again by Theorem \ref{tdense}, $\overline{%
{\textstyle\bigcap\limits_{n\in\mathbb{N} }}
O_{n}}\supseteq\overline{EC(Irr(R))}=EC(R^{\#}),\ $that is, $\overline{%
{\textstyle\bigcap\limits_{n\in\mathbb{N} }}
O_{n}}=EC(R^{\#}).\ $Since $%
{\textstyle\bigcap\limits_{n\in\mathbb{N} }}
O_{n}$ is dense in $D(R),\ $by \cite[Lemma 48.1]{Munkres}, $D(R)\ $is a Baire space.
\end{proof}

Let $X$ be a topological space and $x\in X.\ $Recall from \cite{Munkres} that
$X\ $is said to \textit{have a countable basis at the point }$x$ if there is a
countable family $\left\{  N_{k}\right\}  _{k\in%
\mathbb{N}
}$ of neighborhoods of $x$ such that any neighborhood $U\ $of $x$ contains at
least one of the sets $N_{k}.\ $A space $X\ $that has\ countable basis at each
of its points is said to \textit{satisfy the first countability axiom}.

\begin{proposition}
\label{pfirst}Assume that $R\ $is an integral domain. Then $D(R)\ $satisfies the first
countability axiom.
\end{proposition}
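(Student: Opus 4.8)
The plan is to extract the first countability axiom directly from Lemma \ref{l2}, which already gives us far more than a countable local basis: at every point $[a]\in EC(R^{\#})$ the set $U_a$ is the \emph{smallest} open set containing $[a]$. So the key step is simply to observe that the one-element family $\{U_a\}$ serves as a countable basis at $[a]$ in the sense of \cite{Munkres}: $U_a$ is a neighborhood of $[a]$ by Proposition \ref{p1}(i), and if $V$ is any neighborhood of $[a]$, pick an open set $O$ with $[a]\in O\subseteq V$; then by Lemma \ref{l2} we have $U_a\subseteq O\subseteq V$, so $V$ contains the set $U_a$ from our family.

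Since this works verbatim at every point of $EC(R^{\#})$, the space $D(R)$ has a countable (indeed singleton) basis at each of its points, which is exactly the statement that $D(R)$ satisfies the first countability axiom. I would also remark, for context, that this is consistent with Proposition \ref{Alexandrov}: an Alexandrov space has a minimal neighborhood at each point, and any space with that property is automatically first countable.

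There is essentially no obstacle here; the entire content of the proof is the appeal to Lemma \ref{l2}. The only thing to be slightly careful about is the bookkeeping in the definition — making sure that ``neighborhood'' is handled correctly (passing from an arbitrary neighborhood $V$ to an open set $O$ sandwiched inside it before applying Lemma \ref{l2}) — but this is routine and does not require any property of $R$ beyond what is already established. In particular, the argument is uniform over all integral domains $R$ that are not fields, matching the hypothesis of the proposition.
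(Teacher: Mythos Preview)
Your proof is correct and follows exactly the same approach as the paper: both invoke Lemma \ref{l2} to obtain $U_a$ as the smallest open neighborhood of $[a]$, so that the singleton family $\{U_a\}$ serves as a local basis. Your version is simply a more explicit unpacking of the same one-line argument.
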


\begin{proof}
Let $R\ $be an integral domain and $[a]\in EC(R^{\#}).$ By Lemma \ref{l2},
$U_{a}\ $is the smallest neighborhood of $[a].\ $Thus, $D(R)\ $satisfies the
first countability axiom.
\end{proof}

Recall from \cite{Munkres} that a topological space $X\ $is said to
\textit{satisfy second countability axiom} or \textit{second countable} if
$X\ $has a countable basis.

\begin{proposition}
\label{psecond}Assume that $R\ $is an integral domain and $EC(R^{\#})$ has countable
elements. Then $D(R)\ $is second countable. In particular, if $R\ $is a
countable integral domain, then $D(R)\ $is second countable.
\end{proposition}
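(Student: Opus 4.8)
The proposition asserts that if $EC(R^{\#})$ is countable (in particular if $R$ is countable, since then $R^{\#}$ is countable and hence its quotient set is countable), then $D(R)$ is second countable. The plan is straightforward: exhibit an explicit countable basis.

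First I would recall from Theorem \ref{tdiv} that the family $\mathfrak{B}=\{U_a\}_{a\in R^{\#}}$ is already a basis for $D(R)$. The only issue is that this family is indexed by $R^{\#}$, which may be larger than $EC(R^{\#})$; however, by Proposition \ref{p1}(ii), if $a\sim b$ (that is, $aR=bR$), then $a$ divides $b$ and $b$ divides $a$, so $U_a\subseteq U_b$ and $U_b\subseteq U_a$, i.e.\ $U_a=U_b$. Hence the assignment $[a]\mapsto U_a$ is well-defined on $EC(R^{\#})$, and the basis $\mathfrak{B}$ is in fact $\{U_{[a]}\}_{[a]\in EC(R^{\#})}$, a family indexed by the (at most) countable set $EC(R^{\#})$. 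Therefore $\mathfrak{B}$ is a countable basis, and $D(R)$ is second countable.

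For the ``in particular'' clause, I would note that if $R$ is countable then $R^{\#}\subseteq R$ is countable, so the quotient set $EC(R^{\#})$, being the image of $R^{\#}$ under the canonical surjection, is at most countable; the previous paragraph then applies. (If one wants to be careful about the degenerate case, Remark \ref{remark} allows us to assume $R$ is not a field, so $EC(R^{\#})$ is nonempty, but even the empty space is trivially second countable, so this causes no trouble.)

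There is essentially no obstacle here — the main (and only) point to get right is the well-definedness observation that associated elements yield the same basic open set, so that a basis indexed by $R^{\#}$ collapses to one indexed by $EC(R^{\#})$. I would write the proof in two or three sentences: invoke Proposition \ref{p1}(ii) to see $U_a$ depends only on $[a]$, conclude $\mathfrak{B}$ has cardinality at most $|EC(R^{\#})|\leq\aleph_0$, and finish by the definition of second countable; then dispatch the countable-domain case by the surjection $R^{\#}\twoheadrightarrow EC(R^{\#})$.
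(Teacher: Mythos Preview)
Your proof is correct and follows the same approach as the paper: invoke the basis $\{U_a\}_{a\in R^{\#}}$ from Theorem \ref{tdiv} and observe that it has at most $|EC(R^{\#})|$ distinct members. The paper's own proof is a single sentence (``Follows from Theorem \ref{tdiv} and the assumption\ldots''), so you have simply made explicit the point it leaves implicit, namely that $a\sim b$ forces $U_a=U_b$ via Proposition \ref{p1}(ii), so that the basis is genuinely indexed by $EC(R^{\#})$ rather than by $R^{\#}$.
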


\begin{proof}
Follows from Theorem \ref{tdiv} and the assumption $EC(R^{\#})$ has countable
elements. The rest is clear.
\end{proof}

\begin{remark}
If$\ R\ $is a countable integral domain, then $EC(R^{\#})$ has countable
elements. However, the converse is generally not true. Consider the formal
power series ring $%
\mathbb{R}
\lbrack\lbrack x]]$ over the real numbers $%
\mathbb{R}
.\ $Then, clearly $%
\mathbb{R}
\lbrack\lbrack x]]$ is not countable but $EC(%
\mathbb{R}
\lbrack\lbrack x]]^{\#})=\{\left[  x^{k}\right]  :k\in%
\mathbb{N}
\}$ has countable elements.
\end{remark}

\begin{problem}
Is $EC(R^{\#})$ countable provided that $D(R)$ is second countable?
\end{problem}

Recall from \cite{Steen} that a topological space $X\ $is said to be an
\textit{ultraconnected space} if the intersection of two nonempty closed set
is nonempty. Also, a topological space $X\ $is called a \textit{normal
space} if for every disjoint closed sets $F\ $and $K,\ $there exist open
disjoint sets $U,V\ $such that $F\subseteq U$ and $K\subseteq V\ $%
\cite{Munkres}.

\begin{proposition}
\label{pult}Assume that $R\ $is an integral domain. Then $D(R)\ $is an ultraconnected
space. Hence, $D(R)\ $is a normal space.
\end{proposition}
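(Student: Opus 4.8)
The plan is to show first that $D(R)$ is ultraconnected, and then invoke the standard fact that every ultraconnected space is normal. For ultraconnectedness, I must show that any two nonempty closed subsets $F$ and $K$ of $D(R)$ have nonempty intersection. Since a closed set is a union of closures of its points, it suffices to understand closures of singletons, and here Proposition~\ref{pclosure} is the key tool: $\overline{\{[a]\}}=\{[b]\in EC(R^{\#}):a\text{ divides }b\}$. So pick $[a]\in F$ and $[c]\in K$; then $\overline{\{[a]\}}\subseteq F$ and $\overline{\{[c]\}}\subseteq K$ because $F,K$ are closed. The natural candidate for a common point is $[ac]$: indeed $a\mid ac$ and $c\mid ac$, so $[ac]\in\overline{\{[a]\}}\cap\overline{\{[c]\}}\subseteq F\cap K$. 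Note $ac\in R^{\#}$ since $R$ is a domain (so $ac\neq 0$) and a product of two nonunits is a nonunit, hence $[ac]$ is a genuine point of $EC(R^{\#})$. Therefore $F\cap K\neq\emptyset$, and $D(R)$ is ultraconnected.

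For normality, I would argue that in an ultraconnected space the only pair of disjoint closed sets is $F,K$ with at least one of them empty: if both were nonempty, ultraconnectedness would force $F\cap K\neq\emptyset$, contradicting disjointness. Hence, given disjoint closed $F$ and $K$, without loss of generality $K=\emptyset$; then taking $U=EC(R^{\#})$ (the whole space, which is open) and $V=\emptyset$ gives disjoint open sets with $F\subseteq U$ and $K\subseteq V$. This verifies the normality condition vacuously-but-validly, so $D(R)$ is a normal space.

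I do not anticipate a real obstacle here; the argument is short once Proposition~\ref{pclosure} is in hand, and the product element $ac$ does all the work. The one point to be careful about is the standing convention (Remark~\ref{remark}) that $R$ is not a field, so that $EC(R^{\#})$ is nonempty and the statement is not vacuous in a trivial way — but this is already assumed throughout. A secondary subtlety is simply making sure the definition of normality being used does not additionally require the space to be $T_1$; the paper's stated definition (from \cite{Munkres}) asks only for the separation-of-disjoint-closed-sets property, which is exactly what the ultraconnected argument delivers, so there is nothing further to check.
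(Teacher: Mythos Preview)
Your proof is correct and follows essentially the same approach as the paper: pick points $[a]$ and $[c]$ in the two closed sets, use Proposition~\ref{pclosure} to see that $[ac]$ lies in both closures and hence in both closed sets, and then deduce normality from ultraconnectedness. The paper's proof is terser (it simply says ``the rest is clear'' for normality), whereas you spell out why $ac\in R^{\#}$ and make the vacuous separation argument explicit; these additions are sound and do not change the strategy.
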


\begin{proof}
Let $K_{1},K_{2}$ be two nonempty closed sets of $D(R).\ $Then there exist
$[a]\in K_{1}$ and $[b]\in K_{2}$ for some $a,b\in R^{\#}.\ $Then by
Proposition \ref{pclosure}, we have $[ab]\in\overline{\{[a]\}}\cap
\overline{\{[b]\}}\subseteq K_{1}\cap K_{2}.\ $Hence, $K_{1}\cap K_{2}$ is
nonempty, that is, $D(R)\ $is an ultraconnected space. The rest is clear.
\end{proof}

Recall from \cite{Munkres} that a topological space $X$ is called a connected space if it cannot be written as the union of disjoint nonempty open subsets of $X$. Let $x,y\in X$. A path from $x$ to $y$ in $X$ is a continuous map $f:[a,b]\rightarrow X$ such that $f(a)=x$ and $f(b)=y$. A topological space $X$ is said to be a path connected space if there exists a path in $X$ for every pair of elements of $X$. 

\begin{corollary}
Assume that $R\ $is an integral domain. Then $D(R)\ $is a path connected space, so a
connected space.
\end{corollary}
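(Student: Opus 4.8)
The plan is to prove directly that $D(R)$ is path connected; since every path connected space is connected \cite{Munkres}, the corollary follows. (Recall also that $EC(R^{\#})$ is nonempty, as $R$ is not a field by Remark \ref{remark}, so the statement is not vacuous.)

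So fix two points $[a],[b]\in EC(R^{\#})$. First I would note that the element $ab$ again lies in $R^{\#}$: it is nonzero because $R$ is a domain, and it is a nonunit because otherwise $a$ would be a unit, contradicting $a\in R^{\#}$. Moreover $a$ divides $ab$ and $b$ divides $ab$, so by Proposition \ref{p1}$(ii)$ we have $U_{a}\subseteq U_{ab}$ and $U_{b}\subseteq U_{ab}$. I would then define $G\colon[0,1]\to EC(R^{\#})$ by $G(t)=[a]$ for $0\le t<\tfrac{1}{2}$, $G(\tfrac{1}{2})=[ab]$, and $G(t)=[b]$ for $\tfrac{1}{2}<t\le1$, and check that $G$ is continuous. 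The one point that really needs an argument is that an open set $O$ of $D(R)$ containing $[ab]$ must also contain both $[a]$ and $[b]$: by Lemma \ref{l2} we have $U_{ab}\subseteq O$, and then $[a]\in U_{a}\subseteq U_{ab}\subseteq O$ and $[b]\in U_{b}\subseteq U_{ab}\subseteq O$. Granting this, a routine case analysis on which of $[a],[ab],[b]$ lie in $O$ shows that for every open $O$ the preimage $G^{-1}(O)$ is one of $\emptyset$, $[0,\tfrac{1}{2})$, $(\tfrac{1}{2},1]$, $[0,\tfrac{1}{2})\cup(\tfrac{1}{2},1]$, or $[0,1]$, each of which is open in $[0,1]$. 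Hence $G$ is a path from $[a]$ to $[b]$ in $D(R)$.

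Therefore $D(R)$ is path connected, and consequently connected. The only delicate step is the continuity of $G$, and its heart is the observation above that every open neighbourhood of $[ab]$ automatically swallows $[a]$ and $[b]$; this is precisely why the ``midpoint'' of the path must be taken to be $[ab]$ — a common specialization of $[a]$ and $[b]$, i.e.\ a point of $\overline{\{[a]\}}\cap\overline{\{[b]\}}$ — rather than attempting to jump straight from $[a]$ to $[b]$, which would fail the continuity check. Equivalently, one could assemble the path by concatenating, via the pasting lemma \cite{Munkres}, the two evident ``jump'' paths running from $[ab]$ to $[a]$ and from $[ab]$ to $[b]$; the verification reduces to the same fact.
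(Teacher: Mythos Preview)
Your proof is correct. The continuity check is sound: the key point is exactly the one you isolate, that any open set containing $[ab]$ must contain both $[a]$ and $[b]$, which follows from Lemma \ref{l2} together with $U_a\cup U_b\subseteq U_{ab}$.

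The paper's argument is different in packaging, though close in spirit. Rather than constructing the path explicitly, the paper appeals to Proposition \ref{pult}, which shows that $D(R)$ is ultraconnected (any two nonempty closed sets meet---the witness being precisely your $[ab]\in\overline{\{[a]\}}\cap\overline{\{[b]\}}$), and then quotes the general topological fact that every ultraconnected space is path connected. Your argument is effectively an in-place unpacking of that implication: the three-value step function through a common specialization is exactly how one proves ``ultraconnected $\Rightarrow$ path connected'' in general. The paper's route is shorter and more modular (it isolates ultraconnectedness as a reusable property and then cites a standard consequence), while yours is fully self-contained and makes the path visible, at the cost of carrying out the case analysis by hand.
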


\begin{proof}
Note that every ultraconnected space is a path connected space and every path
connected space is a connected space. So the claim follows from Proposition
\ref{pult}.
\end{proof}

Recall from \cite{Munkres} that a completely normal space is a topological
space X such that every subspace of $X$ is a normal space with the subspace
topology. $X$ is a completely normal space if and only if for every two separated sets
in $X$ (that is, two sets $A$ and $B$ are said to be \textit{seperated sets} if
$\overline{A}\cap B=\emptyset$ and $A\cap\overline{B}=\emptyset$), there exist
disjoint open neighbourhoods containing them.

\begin{proposition}
Assume that $R\ $is an integral domain such that $\left\vert EC(R^{\#})\right\vert
\geq3$. Then $D(R)\ $is not a completely normal space.
\end{proposition}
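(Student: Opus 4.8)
The plan is to refute complete normality directly, via the separated-sets criterion stated above: I will try to produce two nonempty separated subsets $A,B\subseteq EC(R^{\#})$ that cannot be enclosed in disjoint open sets. Note first that $|EC(R^{\#})|\geq 3$ forces $R$ not to be a field (Remark \ref{remark}). The picture I would keep in mind is a triple $\{[x],[x^{2}],[xy]\}$ in which the class $[x]$ divides both of the other two while $[x^{2}]$ and $[xy]$ are incomparable; the obstruction to normality should then live already in the subspace these three points span.

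For the main step, suppose first that $R$ is \emph{not} a valuation domain. By \cite[Proposition 5.2]{Larsen} there exist $x,y\in R^{\#}$ with $x\nmid y$ and $y\nmid x$; then $x\not\sim y$, so $[x^{2}]\neq[xy]$, and moreover neither $x^{2}\mid xy$ nor $xy\mid x^{2}$. I would set $A=\{[x^{2}]\}$ and $B=\{[xy]\}$. By Proposition \ref{pclosure}, $\overline{A}=\{[z]\in EC(R^{\#}):x^{2}\text{ divides }z\}$ does not contain $[xy]$, and $\overline{B}=\{[z]\in EC(R^{\#}):xy\text{ divides }z\}$ does not contain $[x^{2}]$, so $A$ and $B$ are separated. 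On the other hand, if $U,V$ are open with $[x^{2}]\in U$ and $[xy]\in V$, then by Lemma \ref{l2} we have $U_{x^{2}}\subseteq U$ and $U_{xy}\subseteq V$, while by Proposition \ref{p1} the class $[x]$ lies in $U_{x^{2}}\cap U_{xy}$; hence $U\cap V\neq\emptyset$. So $A$ and $B$ admit no disjoint open neighbourhoods, and $D(R)$ is not completely normal. (Equivalently: the subspace $\{[x],[x^{2}],[xy]\}$ carries the topology with open sets $\emptyset,\ \{[x]\},\ \{[x],[x^{2}]\},\ \{[x],[xy]\}$ and the whole set, so $\{[x^{2}]\}$ and $\{[xy]\}$ are disjoint closed sets whose only open neighbourhoods meet in $\{[x]\}$, and this subspace is not normal.)

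The hard part, and the point where I expect the argument to genuinely resist, is the case in which $R$ \emph{is} a valuation domain. Then Theorem \ref{tnested} says $D(R)$ is nested, so any two nonempty open sets are comparable and hence intersect; consequently \emph{no} pair of nonempty sets has disjoint open neighbourhoods, and the only remaining route to non-normality would be to exhibit a nonempty separated pair. But combining Proposition \ref{pclosure} with \cite[Proposition 5.2]{Larsen}: for any classes $[a],[b]$ we have $a\mid b$ or $b\mid a$, hence $[b]\in\overline{\{[a]\}}$ or $[a]\in\overline{\{[b]\}}$, and from this one checks that for arbitrary nonempty $A,B$ at least one of $\overline{A}\cap B$, $A\cap\overline{B}$ is nonempty — so there are no nonempty separated sets, and $D(R)$ is then vacuously completely normal. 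The honest upshot is that the statement as phrased should carry the extra hypothesis that $R$ is not a valuation domain (equivalently, by Theorem \ref{tnested}, that $D(R)$ is not a nested space); under that restriction the construction of the second paragraph completes the proof, and I would record the result in that corrected form.
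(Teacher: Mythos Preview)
Your argument in the non-valuation case is correct and is, structurally, the paper's own proof: the paper simply picks three pairwise nonassociated irreducible elements $a,b,c$ and takes the separated singletons $\{[ab]\}$ and $\{[ac]\}$, whose minimal neighbourhoods $U_{ab}$ and $U_{ac}$ both contain $[a]$. Your $\{[x^{2}]\},\{[xy]\}$ with $x,y$ incomparable is the same idea (two products sharing a nonunit factor but not dividing one another), only with weaker hypotheses on the ingredients.

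Where you go further than the paper is in the valuation case, and there you are right and the paper's statement is wrong. The hypothesis $\lvert EC(R^{\#})\rvert\geq 3$ does not supply three nonassociated irreducibles: in $R=k[[x]]$ there is, up to associates, a single irreducible, yet $\lvert EC(R^{\#})\rvert=\aleph_{0}$. Your analysis of this case is sound --- a nested space is hereditarily normal (every subspace of a nested space is again nested, and in a nested space two disjoint closed sets must have one of them empty), so $D(k[[x]])$ is completely normal, contradicting the Proposition as written. The paper's proof silently assumes the existence of three nonassociated irreducibles, which already forces $R$ not to be a valuation domain and hence is a strictly stronger hypothesis than the one stated. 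Your proposed correction (exclude valuation domains, equivalently require two incomparable nonzero nonunits) is exactly what is needed, and under it your construction and the paper's coincide in spirit, with yours being the more careful of the two.
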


\begin{proof}
Let $a,b,c\in R$ be nonassociated irreducible elements. Then by Proposition
\ref{pclosure}, $\{[ab]\}$ and $\{[ac]\}$ are separated sets for $[ab],[ac]\in
EC(R^{\#})$. However, they have no disjoint open neighborhoods, since their
smallest open neighborhoods intersect $U_{ab}\cap U_{ac}\supseteq U_{a}$.
\end{proof}

Recall from \cite{Munkres} that a topological space $X$, in which one-point
sets are closed in $S$, is a regular space if given any closed set $F$ and any
point $x$ which is not an element of $F$, there exist disjoint open
neighborhood $U$ of $x$ and a neighborhood $V$ of $F$.

\begin{proposition}
\label{propreg}Assume that $R\ $is an integral domain. Then $D(R)\ $is not a regular space.
\end{proposition}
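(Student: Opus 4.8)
The plan is to exhibit a single closed set $F$ and a point $[a]\notin F$ that admit no separation by disjoint open sets; this will work uniformly for every domain that is not a field, so no appeal to irreducible elements is needed. Since $R$ is not a field, fix any $a\in R^{\#}$ and put
\[
F=\overline{\{[a^{2}]\}}=\{[b]\in EC(R^{\#}):a^{2}\ \text{divides}\ b\},
\]
which is closed by Proposition \ref{pclosure}. First I would check that $[a]\notin F$: if $a^{2}$ divided $a$, then $a=a^{2}r$ for some $r\in R$, and cancelling $a$ in the domain $R$ gives $ar=1$, so $a$ would be a unit, contradicting $a\in R^{\#}$.

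Next I would show that every open set containing $F$ already contains the point $[a]$. Indeed, let $V$ be any open set with $F\subseteq V$. Then $[a^{2}]\in F\subseteq V$, so by Lemma \ref{l2} we have $U_{a^{2}}\subseteq V$; since $a$ divides $a^{2}$, Proposition \ref{p1} gives $[a]\in U_{a^{2}}\subseteq V$. Hence for any open neighbourhood $U$ of $[a]$ and any open neighbourhood $V$ of $F$ we get $[a]\in U\cap V$, so $U\cap V\neq\emptyset$. Therefore the point $[a]$, which does not belong to the closed set $F$, and the set $F$ cannot be surrounded by disjoint open sets, and $D(R)$ is not regular. (Alternatively, one may simply note that, in the sense recalled just above, a regular space has closed one-point sets, whereas $D(R)$ is not a $T_{1}$-space by Proposition \ref{pclosure}; this already rules out regularity, but the argument above shows that the separation condition itself fails.)

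There is essentially no hard step here: the whole proof is driven by the fact, recorded in Lemma \ref{l2}, that $U_{a^{2}}$ is the \emph{smallest} open set containing $[a^{2}]$, together with the closure formula of Proposition \ref{pclosure}. The only point requiring a moment's care is the choice of $F$: taking the closure of $[a^{2}]$ rather than, say, of a divisor of $a$ guarantees simultaneously that $[a]\notin F$ and that $[a]\in U_{a^{2}}$, and it avoids assuming that $R$ possesses irreducible elements, so the conclusion holds for arbitrary non-field domains.
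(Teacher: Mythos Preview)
Your argument is correct. The paper's own proof is much shorter: using Munkres's definition of regularity, which \emph{includes} the condition that one-point sets be closed, the paper simply observes (via Proposition~\ref{pclosure}) that $D(R)$ is not $T_{1}$ and stops there. You acknowledge this shortcut in your parenthetical remark, but your main argument does more: by exhibiting the closed set $F=\overline{\{[a^{2}]\}}$ and the point $[a]\notin F$ and showing that every open neighbourhood of $F$ already contains $[a]$, you establish that the \emph{separation condition itself} fails, not merely the $T_{1}$ prerequisite. This is genuinely stronger, since it shows $D(R)$ is not regular even under the competing convention in which regularity does not entail $T_{1}$; the paper's one-line proof would not suffice under that convention. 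Your choice of $F$ is also well judged: it avoids any appeal to irreducible elements and works uniformly for every non-field domain.
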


\begin{proof}
One point sets are not closed in $D(R)$ by Proposition \ref{pclosure}, then
the result follows.
\end{proof}

\begin{remark}
Since $D(R)$ is a $T_{0}$-space, if $D(R)$ were a regular space, $D(R)$ would
also be a $T_{3}$-space. Every $T_{3}$-space is also a Hausdorff space, $D(R)$
would be a Hausdorff space. However, we showed in Proposition \ref{pclosure}
that $D(R)$ is not a Hausdorff space. Then, $D(R)$ cannot be a regular space.
This is another proof of the Proposition \ref{propreg}.
\end{remark}

Note from \cite{Munkres} that a topological space $X$ is called a \textit{compact space} if every open cover of
$X$ has a finite subcover. That is, $X$ is called compact if for every
collection $\{U_{i}\}_{i\in I}$ of open subsets of $X$ such that
$\displaystyle X=\bigcup_{i\in I}U_{i}$, there is a finite subcollection
$J\subset I$ such that $\displaystyle X=\bigcup_{i\in J}U_{i}$.

Let $X$ be a set and $\textstyle{\mathcal{A}}$ a nonempty collection of subsets of
$X$. $\textstyle{\mathcal{A}}$ is said to have the finite intersection
property if every nonempty finite subfamily of ${\mathcal{A}}$ has nonempty intersection.

\begin{theorem}
\cite[Teorem 26.9.]{Munkres} Let $X$ be a topological space. Then $X$ is a
compact space if and only if for every collection $\textstyle {\mathcal{A}}$
of closed sets in $X$ having the finite intersection property, the
intersection $\displaystyle \bigcap_{A\in\mathcal{A}}A$ of all the elements of
$\textstyle {\mathcal{A}}$ is nonempty.
\end{theorem}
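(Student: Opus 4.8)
The plan is to deduce this characterization directly from the definition of compactness by passing to complements and applying De Morgan's laws; no topology beyond the notion of an open cover is needed, so the whole argument is a formal manipulation of quantifiers.

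First I would set up the correspondence between families of closed sets and families of open sets: given a collection $\mathcal{A}$ of closed subsets of $X$, put $\mathcal{C}=\{X\setminus A : A\in\mathcal{A}\}$, a collection of open sets, and note that $\mathcal{A}\mapsto\mathcal{C}$ is a bijection between collections of closed subsets and collections of open subsets of $X$. The key translation is that, by De Morgan, $\bigcap_{A\in\mathcal{A}}A=\emptyset$ holds if and only if $\bigcup_{C\in\mathcal{C}}C=X$; that is, $\mathcal{A}$ has empty total intersection precisely when $\mathcal{C}$ is an open cover of $X$. Applying the same observation to finite subfamilies: a finite subfamily $\{A_{1},\dots,A_{n}\}\subseteq\mathcal{A}$ satisfies $A_{1}\cap\cdots\cap A_{n}=\emptyset$ if and only if the corresponding finite subfamily of $\mathcal{C}$ covers $X$. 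Hence the statement ``$\mathcal{A}$ has the finite intersection property'' is equivalent to ``no finite subfamily of $\mathcal{C}$ covers $X$''.

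Next I would chain these equivalences. Compactness says: for every open cover $\mathcal{C}$ of $X$ there is a finite subfamily of $\mathcal{C}$ covering $X$. Taking the contrapositive of the inner implication and using the translations above, this is equivalent to: for every collection $\mathcal{A}$ of closed subsets of $X$, if every finite subfamily of $\mathcal{A}$ has nonempty intersection, then $\bigcap_{A\in\mathcal{A}}A\neq\emptyset$. Since $\mathcal{A}\leftrightarrow\mathcal{C}$ is a bijection, quantifying over all open collections $\mathcal{C}$ is the same as quantifying over all closed collections $\mathcal{A}$, so the two statements are literally equivalent, which is exactly the assertion of the theorem.

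The only point requiring care is the bookkeeping of quantifiers and the placement of the contrapositive: one must negate ``some finite subfamily covers $X$'' to obtain ``no finite subfamily covers $X$'' and then translate that to the finite intersection property on the closed side, and likewise negate ``$\mathcal{C}$ is a cover'' to get ``$\bigcap_{A\in\mathcal{A}}A\neq\emptyset$''. So the real work, such as it is, lies in checking that ``compact'' and ``the finite intersection characterization'' are contrapositives of one another, rather than inadvertently proving only one direction; there is no genuine obstacle, since the statement is merely a reformulation of the definition.
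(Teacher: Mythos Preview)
Your argument is correct: the passage to complements together with De~Morgan's laws turns the finite intersection property into the negation of ``some finite subfamily covers $X$'', and turns ``$\bigcap_{A\in\mathcal{A}}A=\emptyset$'' into ``$\mathcal{C}$ covers $X$'', so compactness and the closed-set condition are contrapositives of each other. There is nothing to correct.

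As for comparison with the paper: the paper does not prove this theorem at all. It is stated with a citation to \cite[Theorem~26.9]{Munkres} and used as a black box in the proposition that follows (showing $D(R)$ is not compact). So your proposal supplies a proof where the paper simply imports the result from the literature; the argument you give is in fact the standard one that appears in Munkres.
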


\begin{proposition} Assume that $R\ $is an integral domain. Then $D(R)\ $is not a compact space.
\end{proposition}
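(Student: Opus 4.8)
The plan is to exhibit a concrete open cover of $D(R)$ that admits no finite subcover. Since $R$ is not a field (recall this standing assumption), we have $R^{\#}\neq\emptyset$ and hence $EC(R^{\#})\neq\emptyset$; by Proposition~\ref{p1}~$(iii)$ the family $\mathcal{U}=\{U_{a}\}_{a\in R^{\#}}$ is an open cover of $EC(R^{\#})$. It therefore suffices to show that no finite subfamily of $\mathcal{U}$ covers $EC(R^{\#})$.

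So I would argue by contradiction. Suppose $EC(R^{\#})=U_{a_{1}}\cup\cdots\cup U_{a_{n}}$ for some $a_{1},\dots,a_{n}\in R^{\#}$ with $n\geq1$, and set $c=a_{1}^{2}a_{2}a_{3}\cdots a_{n}$, which lies in $R^{\#}$ because $a_{1}$ is a nonzero nonunit. The decisive step is to check that $c$ divides none of the $a_{i}$, so that $[c]\notin U_{a_{i}}$ for every $i$ — contradicting $[c]\in EC(R^{\#})=\bigcup_{i}U_{a_{i}}$. To see this, assume $c\mid a_{i}$, say $a_{i}=ct$ with $t\in R$. Since $a_{i}$ occurs among the factors forming $c$, we may write $c=a_{i}d$, where $d$ is the product of the remaining factors and is still divisible by the nonunit $a_{1}$; here the square on $a_{1}$ is precisely what handles the degenerate case $n=1$ (where $c=a_{1}^{2}$ and one only needs $a_{1}^{2}\nmid a_{1}$) as well as the case $i=1$. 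Then $a_{i}=ct=a_{i}dt$, and cancelling $a_{i}$ in the integral domain $R$ gives $1=dt$; but $d$ is a nonunit, a contradiction. Hence $\mathcal{U}$ has no finite subcover, and $D(R)$ is not compact.

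Equivalently, the same content can be packaged through the finite-intersection-property criterion recalled just above: the closed sets $EC(R^{\#})\setminus U_{a}$ $(a\in R^{\#})$ have the property that any finitely many of them meet — the element $[c]$ constructed above witnesses this for $EC(R^{\#})\setminus U_{a_{1}},\dots,EC(R^{\#})\setminus U_{a_{n}}$ — while $\bigcap_{a\in R^{\#}}\bigl(EC(R^{\#})\setminus U_{a}\bigr)=EC(R^{\#})\setminus\bigcup_{a}U_{a}=\emptyset$, so the conclusion of that theorem fails and $D(R)$ cannot be compact. I do not expect any serious obstacle: the only point requiring a little care is choosing the witness $c$ by a single formula valid for all $n\geq1$, after which the verification $c\nmid a_{i}$ is immediate from cancellation in the domain, and everything else is a direct appeal to Proposition~\ref{p1}.
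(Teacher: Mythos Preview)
Your argument is correct. The construction $c=a_{1}^{2}a_{2}\cdots a_{n}$ works exactly as you say: in every case the cofactor $d$ with $c=a_{i}d$ retains the nonunit factor $a_{1}$, so cancellation in the domain rules out $c\mid a_{i}$, and $[c]$ escapes each $U_{a_{i}}$.

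The paper proceeds a bit differently. Instead of the basic open cover $\{U_{a}\}$, it applies the finite-intersection-property criterion to the family of \emph{point closures} $\overline{\{[a]\}}=\{[b]:a\mid b\}$ (Proposition~\ref{pclosure}). The FIP holds because $[a_{1}\cdots a_{n}]$ lies in every $\overline{\{[a_{i}]\}}$, while the total intersection is empty since any $[x]$ in it would satisfy $[x]\in\overline{\{[x^{2}]\}}$, forcing $x^{2}\mid x$ and hence $x$ a unit. Note that these closed sets are \emph{not} the complements $EC(R^{\#})\setminus U_{a}$ from your second paragraph: the paper works with upward (multiple) sets, you with downward (divisor) sets. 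Your route is more direct---it uses only the definition of compactness and Proposition~\ref{p1}, avoiding both Proposition~\ref{pclosure} and the quoted FIP theorem---at the modest cost of the squared factor on $a_{1}$ to make a single witness work uniformly; the paper's $x\mapsto x^{2}$ trick handles the empty-intersection step with no case analysis at all.
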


\begin{proof}
Let $\mathcal{A}=\bigl\{\overline{\{[a]\}}\ |\ [a]\in EC(R^{\#})\bigr\}$ be
the set of closure of points in $EC(R^{\#})$. Then by Proposition
\ref{pclosure},$\mathcal{A}$ has the finite intersection property. Now we will
show that $\bigcap_{\overline{\{[a]\}}\in\mathcal{A}}\overline{\{[a]\}}$ is
empty. Suppose that $\bigcap_{\overline{\{[a]\}}\in\mathcal{A}}\overline
{\{[a]\}}\neq\emptyset$. Choose $\left[  x\right]  \in\bigcap_{\overline
{\{[a]\}}\in\mathcal{A}}\overline{\{[a]\}}$. Then $x$ is nonzero nonunit and
$\left[  x\right]  \in\overline{\{[x^{2}]\}}$. This gives $x=x^{2}y$ for some
$y\in R$ which implies that $xy=1$. Thus $x$ is a unit of $R\ $which is a
contradiction. By the previous theorem, then $D(R)\ $cannot be a compact space.
\end{proof}

\section{Noetherian Topological Space and Divisor topology}

This section is dedicated to the study of Noetherian topological space and the
Noetherian property of the divisor topology. Recall from \cite{Hars} that a
topological space $X$ is called a \textit{Noetherian space} if it
satisfies the descending chain condition for closed subsets of $X:\ $for every
descending chain
\[
K_{1}\supseteq K_{2}\supseteq\cdots\supseteq K_{n}\supseteq\cdots
\]
of closed sets in $X,\ $there exists $n_{0}\in%
\mathbb{N}
$ such that $K_{n_{0}}=K_{n+n_{0}}\ $for every $n\geq1.\ $Evidently, $X\ $is a Noetherian topological space if and only if it satisfies the
ascending chain condition for open subsets of $X:\ $for every ascending chain
\[
O_{1}\subseteq O_{2}\subseteq\cdots\subseteq O_{n}\subseteq\cdots
\]
of open sets in$\ X,\ $there exists $n_{0}\in%
\mathbb{N}
$ such that $O_{n_{0}}=O_{n+n_{0}}\ $for every $n\geq1.\ $A topological space
$X\ $is said to \textit{satisfy maximum condition (minimum condition) on open
sets (closed sets)} if each family of open (closed)\ subsets $\{O_{\alpha
}\}_{\alpha\in\Delta}$\ $\left(  \{F_{\alpha}\}_{\alpha\in\Delta}\right)  $ of
$X\ $has a maximal element (minimal element) with respect to $\subseteq.\ $

\begin{theorem}
\label{tNotherian}Let $(X,\tau)$ be a topological space with the basis
$\mathfrak{B}$. Then the following statements are equivalent.

(i)\ $X$ is a Noetherian topological space.

(ii) $X\ $satisfies minimum condition on closed sets.

(iii)\ $X\ $satisfies maximum condition on open sets.

(iv) Any subfamily $\mathfrak{B}^{\prime}$ of $\mathfrak{B}$ has a maximal element.
\end{theorem}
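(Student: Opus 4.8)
The plan is to prove the four equivalences by reducing the first three to a single order-theoretic fact about the poset of open sets, and then treating $(iv)$ separately. Concretely, I would establish $(i)\Leftrightarrow(ii)\Leftrightarrow(iii)$ first, and then $(iii)\Leftrightarrow(iv)$.

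For the first block, observe that $A\mapsto X\setminus A$ is an inclusion-reversing bijection from the closed sets of $X$ onto the open sets of $X$: it turns descending chains into ascending chains and minimal elements into maximal elements. Hence the descending chain condition on closed sets is the same as the ascending chain condition on open sets (this is exactly the remark already recorded in the text, which is the first link to $(i)$), and the minimum condition on closed sets is the same as the maximum condition on open sets, so $(ii)\Leftrightarrow(iii)$. It then suffices to prove, for the poset $(\tau,\subseteq)$, that the ascending chain condition is equivalent to the maximum condition (every nonempty subfamily of open sets having a maximal element). This is the standard dichotomy: the maximum condition forces the ACC because a maximal element of the set of terms of an ascending chain is a term past which the chain is constant; conversely, if some nonempty family of open sets had no maximal element, one could recursively select $O_{1}\subsetneq O_{2}\subsetneq\cdots$ inside it, contradicting the ACC. (The converse uses dependent choice, as is usual for such statements.) Applying the same dichotomy to the poset of closed sets ties the DCC to the minimum condition, so $(i),(ii),(iii)$ are equivalent.

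The implication $(iii)\Rightarrow(iv)$ is immediate, since a subfamily of $\mathfrak{B}$ is a fortiori a family of open subsets of $X$ and so has a maximal element under $(iii)$.

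The heart of the theorem, and the step I expect to be the main obstacle, is $(iv)\Rightarrow(iii)$ (equivalently $(iv)\Rightarrow(i)$). The natural line of attack is: given a family $\{O_{\alpha}\}_{\alpha\in\Delta}$ of open sets, pass to the subfamily $\mathfrak{B}'=\{B\in\mathfrak{B}:\ B\subseteq O_{\alpha}\text{ for some }\alpha\}$ of the basis, use $(iv)$ to obtain a maximal element $B_{0}\in\mathfrak{B}'$ with $B_{0}\subseteq O_{\alpha_{0}}$, and then try to conclude that $O_{\alpha_{0}}$ is maximal among the $O_{\alpha}$ — if it were not, choose $x\in O_{\alpha_{1}}\setminus O_{\alpha_{0}}$ for some larger $O_{\alpha_{1}}$ and a basic set $B$ with $x\in B\subseteq O_{\alpha_{1}}$, which lies in $\mathfrak{B}'$ but is not contained in $O_{\alpha_{0}}$. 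The delicate point — the part that requires genuine care rather than bookkeeping — is that this $B$ need not contain $B_{0}$, so it does not by itself contradict the maximality of $B_{0}$; the real work is the passage from ``maximal basic set'' to something that actually controls $\bigcup\mathfrak{B}'=\bigcup_{\alpha}O_{\alpha}$, using the representation $O_{\alpha}=\bigcup\{B\in\mathfrak{B}:\ B\subseteq O_{\alpha}\}$. Once that passage is secured, $B_{0}$ coincides with one of the $O_{\alpha}$ and is the required maximal open set, closing the cycle $(i)\Rightarrow(ii)\Rightarrow(iii)\Rightarrow(iv)\Rightarrow(i)$.
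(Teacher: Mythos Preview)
Your treatment of $(i)\Leftrightarrow(ii)\Leftrightarrow(iii)$ and of $(iii)\Rightarrow(iv)$ is correct and is essentially the paper's argument (the paper runs $(i)\Rightarrow(ii)\Rightarrow(iii)\Rightarrow(iv)$ by the same complementation and chain-construction reasoning). The difficulty you yourself flag in $(iv)\Rightarrow(iii)$, however, is not a matter of extra care: the passage cannot be secured, because the implication is false as stated. Take $X=\mathbb{N}$ with the discrete topology and basis $\mathfrak{B}=\{\{n\}:n\in\mathbb{N}\}$. Every nonempty subfamily of $\mathfrak{B}$ consists of pairwise incomparable singletons, so every member is maximal and $(iv)$ holds; yet $\{1\}\subsetneq\{1,2\}\subsetneq\{1,2,3\}\subsetneq\cdots$ is a non-terminating ascending chain of open sets, so $(i)$, $(ii)$, $(iii)$ all fail. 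Your ``maximal basic set'' $B_{0}$ carries no information about unions of other basic sets, which is exactly why your attempted contradiction does not fire.

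The paper's own proof of $(iv)\Rightarrow(i)$ contains the same error in disguise: it forms $\mathfrak{B}'$ as the family of all \emph{finite unions} of basic sets appearing in the chain and then invokes $(iv)$ to extract a maximal element of $\mathfrak{B}'$. But a family of finite unions of basis elements is not in general a subfamily of $\mathfrak{B}$, so hypothesis $(iv)$ does not apply to it. The argument would be valid if $(iv)$ were strengthened to ``every family of finite unions of elements of $\mathfrak{B}$ has a maximal element'', and that is presumably what is intended; with $(iv)$ as literally written, the equivalence breaks. (The only later use of this theorem in the paper invokes the direction $(i)\Rightarrow(iv)$, which is unaffected.)
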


\begin{proof}
$(i)\Rightarrow(ii):\ $Let $X\ $be a Noetherian topological space and
$\mathfrak{F}=\{F_{\alpha}\}_{\alpha\in\Delta}$ be a family of closed sets of
$X.\ $Assume that $\mathfrak{F}$ has not minimal element. Since $F_{\alpha
_{1}}$ is not minimal element, there exists $\alpha_{2}\in\Delta$ such that
$F_{\alpha_{1}}\supsetneq F_{\alpha_{2}}.\ $Since $F_{\alpha_{2}}$ is not
minimal element, there exists $\alpha_{3}\in\Delta$ such that $F_{\alpha_{2}%
}\supsetneq F_{\alpha_{3}}.\ $If we continue in this manner, we can obtain the
following infinite descending chain of closed sets
\[
F_{\alpha_{1}}\supsetneq F_{\alpha_{2}}\supsetneq F_{\alpha_{3}}\supsetneq
F_{\alpha_{4}}\supsetneq\cdots\supsetneq F_{\alpha_{n}}\supsetneq\cdots.
\]
Thus, $X\ $can not be Noetherian space, a contradiction. Hence, $X\ $satisfies
minimum condition on closed sets.

$(ii)\Rightarrow(iii):\ $Assume that $X\ $satisfies minimum condition on
closed sets. Let $\mathfrak{O}=\{O_{\alpha}\}_{\alpha\in\Delta}$ be a family
of open sets of $X.\ $Consider the family of closed sets $\mathfrak{F=}%
\{X-O_{\alpha}\}_{\alpha\in\Delta}$ of $X.\ $By the assumption (ii), there
exists a minimal element $X-O_{\alpha_{j}}$ of $\mathfrak{F}$ for some
$\alpha_{j}\in\Delta.\ $Let $O_{\alpha_{j}}\subseteq O_{\alpha_{i}}\ $for some
$\alpha_{i}\in\Delta.\ $Then we have $X-O_{\alpha_{i}}\subseteq X-O_{\alpha
_{j}}.$\ Since $X-O_{\alpha_{j}}$ is a minimal element of $\mathfrak{F}$, we
conclude that $X-O_{\alpha_{j}}=X-O_{\alpha_{i}}$ which implies that
$O_{\alpha_{j}}=O_{\alpha_{i}}.\ $Thus, $O_{\alpha_{j}}$ is a maximal element
of $\mathfrak{O}.$

$(iii)\Rightarrow(iv):$ It is clear since $\mathfrak{B}^{\prime}$ consists of
open sets in $X.\ $

$(iv)\Rightarrow(i):\ $Let
\[
O_{1}\subseteq O_{2}\subseteq\cdots\subseteq O_{n}\subseteq\cdots
\]
be an ascending chain of open sets in$\ X.\ $Then for each $n\in%
\mathbb{N}
,\ $we can write $O_{n}=%
{\textstyle\bigcup\limits_{\alpha\in\Delta_{n}}}
B_{\alpha}$ for some $B_{\alpha}\in\mathfrak{B}.\ $Let $\mathfrak{B}%
_{n}^{\prime}=\{B_{\alpha}:\ \alpha\in\Delta_{n}\}$ and $\mathfrak{B}^{\prime
}$ be the family of all finite unions of the subfamily $%
{\textstyle\bigcup\limits_{n\in\mathbb{N}}}
\mathfrak{B}_{n}^{\prime}\subseteq\mathfrak{B}.\ $Then by assumption, there
exists a maximal element $U\ $of $\mathfrak{B}^{\prime}.\ $Then there exist
$B_{\alpha_{1}}\in\mathfrak{B}_{k_{1}}^{\prime},B_{\alpha_{2}}\in
\mathfrak{B}_{k_{2}}^{\prime},\ldots,B_{\alpha_{m}}\in\mathfrak{B}_{k_{m}%
}^{\prime}\ $such that $U=B_{\alpha_{1}}\cup B_{\alpha_{2}}\cup\cdots\cup
B_{\alpha_{m}}.\ $Then for \ any $B_{\alpha}\in\mathfrak{B}^{\prime},\ $since
$U\subseteq U\cup B_{\alpha},$ we have $U=U\cup B_{\alpha}$ which implies that
$B_{\alpha}\subseteq U.\ $This gives $O_{n}\subseteq U$ for every $n\in%
\mathbb{N}
.\ $In particular,
\[
O_{k_{1}}\cup O_{k_{2}}\cup\cdots\cup O_{k_{m}}\subseteq U\subseteq O_{k_{1}%
}\cup O_{k_{2}}\cup\cdots\cup O_{k_{m}}.\
\]
Without loss of generality, we may assume that $k_{1}\leq k_{2}\leq\cdots\leq
k_{m}.\ $Then we have
\[
O_{k_{1}}\cup O_{k_{2}}\cup\cdots\cup O_{k_{m}}=O_{k_{m}}=U
\]
and also $O_{n}\subseteq O_{k_{m}}$ for every $n\in%
\mathbb{N}
.\ $This implies that $O_{k_{m}}=O_{n}\ $for every $n\geq k_{m}.\ $Thus, $X$
is a Noetherian topological space.
\end{proof}

Up to now, we assume that $R$ is not a field for the sake of $D(R)$ is empty
space. Here, we drop the condition $R$ is not a field for the following
result. Now, we are ready to determine the conditions under which $D(R)\ $is a
Noetherian space.

\begin{theorem}
\label{tnoe}Assume that $R\ $is an integral domain. The following statements are equivalent.

(i)\ $D(R)\ $is a Noetherian space.

(ii)\ $R\ $is a field.
\end{theorem}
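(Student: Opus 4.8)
The plan is to prove the two implications separately, using Theorem \ref{tNotherian} for the forward direction and Remark \ref{remark} for the backward direction.

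For $(ii)\Rightarrow(i)$: if $R$ is a field, then by Remark \ref{remark} we have $R^{\#}=\emptyset$ and hence $EC(R^{\#})=\emptyset$, so $D(R)$ is the empty topological space. The empty space trivially satisfies the descending chain condition on closed sets (its only closed set is $\emptyset$), so it is Noetherian. This is immediate.

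For $(i)\Rightarrow(ii)$: I would argue by contraposition. Suppose $R$ is not a field; then there exists a nonzero nonunit element $a\in R^{\#}$. The idea is to produce a strictly ascending chain of open sets, or equivalently by Theorem \ref{tNotherian} a subfamily of the basis $\mathfrak{B}=\{U_{x}\}_{x\in R^{\#}}$ with no maximal element. The natural candidate is the chain of powers of $a$: consider $U_{a}\subseteq U_{a^{2}}\subseteq U_{a^{3}}\subseteq\cdots\subseteq U_{a^{n}}\subseteq\cdots$. Each inclusion holds by Proposition \ref{p1}$(ii)$ since $a^{n}$ divides $a^{n+1}$. The key point is that these inclusions are strict: $U_{a^{n}}\neq U_{a^{n+1}}$ because $[a^{n+1}]\in U_{a^{n+1}}$ but $a^{n+1}$ does not divide $a^{n}$ (if it did, then since $R$ is a domain we could cancel $a^{n}$ to get that $a$ divides $1$, contradicting that $a$ is a nonunit), so $[a^{n+1}]\notin U_{a^{n}}$ by Proposition \ref{p1}$(i)$ and $(ii)$. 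Hence $\{U_{a^{n}}\}_{n\in\mathbb{N}}$ is a strictly ascending chain of open sets, so $D(R)$ fails the ascending chain condition on open sets and is therefore not Noetherian.

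The main (and only mild) obstacle is verifying strictness of the chain, i.e., that $a^{n+1}\nmid a^{n}$ in an integral domain when $a$ is a nonzero nonunit; this rests on the cancellation property of domains and the definition of a nonunit, so it is routine. Everything else is a direct appeal to Remark \ref{remark} and Proposition \ref{p1}. I could alternatively phrase the non-Noetherian conclusion directly via the open chain rather than through Theorem \ref{tNotherian}$(iv)$, but invoking that characterization is equally clean; I will state the chain explicitly for transparency.
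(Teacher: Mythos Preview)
Your proposal is correct and follows essentially the same approach as the paper: both directions use Remark~\ref{remark} for $(ii)\Rightarrow(i)$, and for $(i)\Rightarrow(ii)$ both use the chain $U_{a}\subseteq U_{a^{2}}\subseteq\cdots$ together with cancellation in a domain to force $a$ to be a unit. The only cosmetic difference is that the paper argues by contradiction via Theorem~\ref{tNotherian}$(iv)$ (the subfamily $\{U_{a^{n}}\}$ must have a maximal element), while you argue by contraposition and exhibit the strictly ascending chain directly; the underlying computation is identical.
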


\begin{proof}
$(i)\Rightarrow(ii):\ $Let $D(R)\ $be a Noetherian space. Now, we will show
that every nonzero element is invertible. Choose a nonzero element $a\in R$
and assume that $a$ is not invertible. It is clear that
\[
U_{a}\subseteq U_{a^{2}}\subseteq\cdots\subseteq U_{a^{n}}\subseteq
U_{a^{n+1}}\subseteq\cdots
\]
and $\{U_{a^{n}}\}_{n\in%
\mathbb{N}
}$ is a subfamily of the basis $\{U_{x}\}_{x\in R^{\#}}$ of $D(R).\ $Then by
Theorem \ref{tNotherian}, $\{U_{a^{n}}\}_{n\in%
\mathbb{N}
}$ has a maximal element. Let $U_{a^{k}}$ be a maximal element of $\{U_{a^{n}%
}\}_{n\in%
\mathbb{N}
}.\ $Thus $U_{a^{k}}\subseteq U_{a^{k+1}}$ gives that $U_{a^{k}}=U_{a^{k+1}%
}.\ $This implies that $a^{k+1}|a^{k}\ $and we can write $a^{k}=a^{k+1}y$ for
some $y\in R.\ $Since $R\ $is an integral domain, we have $1=ay$\ which is a
contradiction. Thus, $a$ is invertible and $R\ $is a field.

$(ii)\Rightarrow(i):\ $Let $R\ $be a field. Then by Remark \ref{remark},
$D(R)\ $is an empty space which is trivially Noetherian.
\end{proof}

\begin{corollary}
Assume that $R$ is an integral domain which is not a field. Then $D(R)$ can not be a
Noetherian topological space.
\end{corollary}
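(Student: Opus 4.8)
The plan is to derive this directly as a contrapositive-style consequence of Theorem \ref{tnoe}. The statement to prove is: if $R$ is an integral domain which is not a field, then $D(R)$ cannot be a Noetherian topological space. By Theorem \ref{tnoe}, $D(R)$ is a Noetherian space if and only if $R$ is a field. So first I would simply invoke this equivalence: suppose, for contradiction, that $R$ is not a field but $D(R)$ is Noetherian. Then by the implication $(i)\Rightarrow(ii)$ of Theorem \ref{tnoe}, $R$ must be a field, contradicting our hypothesis. Hence $D(R)$ is not Noetherian.

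There is essentially no obstacle here, since the corollary is a trivial logical rearrangement of the theorem just proved. The only mild subtlety worth a sentence is that Theorem \ref{tnoe} was stated without the standing assumption ``$R$ is not a field'' (as the authors explicitly note before stating it), so the equivalence applies to every integral domain, and in particular we are entitled to feed a non-field $R$ into it. If one preferred a self-contained argument instead of citing the theorem, one could reproduce the short chain $U_{a}\subseteq U_{a^{2}}\subseteq\cdots$ for a fixed nonzero nonunit $a$ (which exists precisely because $R$ is not a field): this ascending chain of open sets fails to stabilize, since $U_{a^{k}}=U_{a^{k+1}}$ would force $a^{k+1}\mid a^{k}$, hence $1=ay$ in the domain $R$, making $a$ a unit — a contradiction. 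A non-stabilizing ascending chain of open sets shows $D(R)$ is not Noetherian.

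So the write-up is a two-line proof. I would present it as follows.

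\begin{proof}
Assume, for the sake of contradiction, that $R$ is not a field but $D(R)$ is a Noetherian space. Then by Theorem \ref{tnoe}, $R$ is a field, which contradicts our assumption. Hence $D(R)$ cannot be a Noetherian topological space.
\end{proof}
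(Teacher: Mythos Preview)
Your proof is correct and matches the paper's approach: the corollary is stated immediately after Theorem \ref{tnoe} without a separate proof, so it is intended as the direct logical consequence you describe. Your two-line contradiction argument (and the optional self-contained chain $U_{a}\subseteq U_{a^{2}}\subseteq\cdots$) is exactly in line with this.
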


\section{Infinitude of primes}

Euclid first proved the infinitude of prime numbers about 300 BC. Nearly 2000
years after Euclid's proof, the German mathematician Christian Goldbach gave a
new proof of the infinitude of primes using Fermat's numbers. Afterward, in
1737, Euler used harmonic series and gave another proof of the infinitude of
primes. The next well-known proof of the infinitude of primes is due to
another German mathematician Peter Dirichlet. In 1837, he published a theorem
on arithmetic progression. His proof combined the concepts in mathematical
analysis and number theory giving rise to a new field of mathematics known as
analytic number theory. Afterward, the Hungarian mathematician, Paul Erd\"{o}s
proved the infinitude of primes by determining a lower bound for $\pi
(n),\ $where $\pi(n)$ denotes the number of primes less than or equal to
$n.\ $He showed that$\ \pi(n)\geq\frac{1}{2}\log_{2}n$ and thus he obtained
the infinitude of primes. In 1955, Harry Furstenberg defined a topology on the
integers $%
\mathbb{Z}
$ by using arithmetic progression. He gave the first topological proof of the
infinitude of primes \cite{Furs55}. For many other proofs about the infinitude
of primes, we refer \cite{Harison} and \cite{Ozcan} to the reader. In this
section, we will give an alternative proof of the infinitude of primes in
terms of the divisor topology $D(R)$. In the following theorem, we denote the
cardinality of a set $S\ $by $\left\vert S\right\vert \ $and the set of units
in $R$ by $u(R).$

\begin{theorem}
\label{infty}(\textbf{Infinitude of primes) }Let $R$ be a UFD which is not a
field. If $\left\vert u(R)\right\vert <\left\vert R\right\vert ,\ $then $R$
has infinitely many prime elements.
\end{theorem}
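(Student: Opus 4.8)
The plan is to argue by contradiction using the topological machinery developed above, in the spirit of Furstenberg but with the divisor topology in place of the arithmetic progression topology. Suppose $R$ is a UFD, not a field, with only finitely many prime elements (up to associates), say $[p_1],\ldots,[p_n]$ exhaust $EC(Irr(R))$. Since $R$ is a UFD it is atomic, so by Theorem \ref{tdense} the set $EC(Irr(R))=\{[p_1],\ldots,[p_n]\}$ is dense in $D(R)$. I would then compute its closure directly from Proposition \ref{pclosure}: $\overline{\{[p_i]\}}=\{[b]\in EC(R^{\#}):p_i\text{ divides }b\}$, and hence $\overline{EC(Irr(R))}=\bigcup_{i=1}^n\overline{\{[p_i]\}}$ because closure commutes with finite unions. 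Density forces this union to be all of $EC(R^{\#})$, which is automatic here since every nonzero nonunit is divisible by some prime; so density alone gives no contradiction, and the finiteness must be exploited through cardinality instead.

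The key step, then, is a counting argument. With only finitely many primes $p_1,\ldots,p_n$ (up to associates), every nonzero nonunit of $R$ has the form $u\,p_1^{e_1}\cdots p_n^{e_n}$ with $u\in u(R)$ and $(e_1,\ldots,e_n)\in\mathbb{N}^n\setminus\{0\}$, by uniqueness of factorization. Two such elements are associated iff they have the same exponent vector, so $|EC(R^{\#})|=|\mathbb{N}^n\setminus\{0\}|=\aleph_0$, a countable set. On the other hand, every nonzero element of $R$ is a unit times such a product (allowing the zero exponent vector for units themselves), so $R\setminus\{0\}$ is the image of $u(R)\times\mathbb{N}^n$ under multiplication, giving $|R|=|R\setminus\{0\}|\leq |u(R)\times\mathbb{N}^n|=|u(R)|\cdot\aleph_0=\max(|u(R)|,\aleph_0)$. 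Since $R$ is not a field it has a nonzero nonunit, so $|R|\geq\aleph_0$ and thus $|R|=\max(|u(R)|,\aleph_0)$. The hypothesis $|u(R)|<|R|$ then forces $|u(R)|<\aleph_0$, i.e. $u(R)$ is finite, whence $|R|=\aleph_0$; but one still needs a contradiction, so I would instead phrase it as: $|R|=\max(|u(R)|,\aleph_0)$ together with $|u(R)|<|R|$ gives $|R|=\aleph_0$ and $|u(R)|$ finite — and this is consistent, so the contradiction must come from somewhere sharper.

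Rethinking, the clean contradiction is this: if there are only finitely many primes, then from $R\setminus\{0\}\subseteq u(R)\cdot\{p_1^{e_1}\cdots p_n^{e_n}\}$ we get an injection-free surjection showing $|R\setminus\{0\}|\le |u(R)|\cdot\aleph_0$, but actually the map $u(R)\times(\mathbb{N}^n)\to R\setminus\{0\}$ is a \emph{bijection} (unique factorization makes it injective, and every nonzero element is hit). Hence $|R|=|u(R)|\cdot\aleph_0$. If $|u(R)|$ is infinite this equals $|u(R)|$, contradicting $|u(R)|<|R|$; if $|u(R)|$ is finite this equals $\aleph_0$, and then $|R|=\aleph_0>|u(R)|$ is fine — so the genuine contradiction is only obtained when $u(R)$ is infinite, and when $u(R)$ is finite one instead observes $|R|=\aleph_0$ and must supply a separate topological argument. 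The honest main obstacle is therefore reconciling the two cases; I expect the paper resolves it by noting that when $u(R)$ is finite and $D(R)$ has countably many points, one appeals to the Baire space structure (Theorem \ref{baire}): if there were finitely many primes, $\{[p_1]\},\ldots,\{[p_n]\}$ would be finitely many singletons whose union $EC(Irr(R))$ is open (each is open by Proposition \ref{piso}) and dense, but then its complement is a countable union of closed nowhere-dense singletons $\overline{\{[q]\}}\setminus EC(Irr(R))$ covering a dense set, contradicting that $D(R)$ is Baire once one checks the relevant sets have empty interior. The hard part will be making the Baire-category contradiction precise: one must verify that the closed sets $\overline{\{[p_i^2]\}}$, or the point-complements, genuinely have empty interior in $D(R)$, which follows from Lemma \ref{l2} since the smallest open neighborhood $U_a$ of any $[a]$ contains $[a]$ together with classes of all its divisors and is never contained in a proper closed set.
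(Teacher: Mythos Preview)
Your cardinality reduction is correct and essentially matches the paper: from $R\setminus\{0\}\cong u(R)\times\mathbb{N}^{n}$ one gets $|R|=|u(R)|\cdot\aleph_{0}$, so $|u(R)|<|R|$ forces $u(R)$ to be finite. The paper does exactly this (phrased slightly differently), and like you it isolates the case $u(R)$ finite as the one requiring further work.

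The genuine gap is in your handling of that remaining case. The Baire argument you sketch does not go through. The closed sets $\overline{\{[p_{i}]\}}=\{[b]:p_{i}\mid b\}$ do \emph{not} have empty interior: by Proposition~\ref{piso} the singleton $\{[p_{i}]\}=U_{p_{i}}$ is open and sits inside $\overline{\{[p_{i}]\}}$, so each of these closed sets contains an isolated point and is certainly not nowhere dense. Dually, the open complements $\overline{\{[p_{i}]\}}^{C}$ are not dense (no neighbourhood of $[p_{i}^{k}]$ meets them). Thus writing $EC(R^{\#})=\bigcup_{i=1}^{n}\overline{\{[p_{i}]\}}$ produces no tension with the Baire property. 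Your final sentence, claiming that $U_{a}$ ``is never contained in a proper closed set,'' is simply false for $a=p_{i}$.

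What the paper actually does in the finite-$u(R)$ case is a Euclid-type construction dressed in the divisor topology. Density of $EC(Irr(R))$ is rewritten, via Proposition~\ref{pclosure}, as $\bigcap_{i=1}^{n}\overline{\{[a_{i}]\}}^{C}=\emptyset$, i.e.\ there is no nonzero nonunit coprime to every $a_{i}$. One then sets $x_{m}=a_{1}^{m}+a_{2}a_{3}\cdots a_{n}$; these are infinitely many distinct elements, so by finiteness of $u(R)$ some $x_{t}$ is a nonzero nonunit. But $a_{1}\nmid x_{t}$ (since $a_{1}\nmid a_{2}\cdots a_{n}$) and $a_{i}\nmid x_{t}$ for $i\ge 2$ (since $a_{i}\nmid a_{1}^{t}$), so $[x_{t}]\in\bigcap_{i=1}^{n}\overline{\{[a_{i}]\}}^{C}$, contradicting emptiness. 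The missing idea in your attempt is precisely this explicit coprime witness; the topology repackages the statement ``every nonzero nonunit is divisible by some $a_{i}$'' but does not by itself manufacture the counterexample.
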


\begin{proof}
Assume that $R$ has finitely many prime elements, say $a_{1},a_{2}%
,\ldots,a_{n}.\ $Since $R\ $is a UFD, every nonzero nonunit element can be
written of the form $a_{1}^{k_{1}}a_{2}^{k_{2}}\cdots a_{n}^{k_{n}}$ where
$k_{1},k_{2},\ldots,k_{n}\in%
\mathbb{N}
\cup\{0\}.$\ Also, since $R$ is integral domain, we have $\left\vert
u(R)\right\vert \leq\left\vert R^{\#}\right\vert $. Thus $R$ has countable
element. If $\left\vert u(R)\right\vert $ is infinite, then $\left\vert
u(R)\right\vert $ has at least countable element, and so $\left\vert
u(R)\right\vert =\left\vert R\right\vert $ which is a contradiction. Thus we
have $\left\vert u(R)\right\vert $ is finite. By Theorem \ref{tdense},
$EC(Irr(R))=\{a_{1},a_{2},\ldots,a_{n}\}$ is dense in $D(R).\ $Thus we
conclude that $\overline{EC(Irr(R))}=%
{\displaystyle\bigcup\limits_{i=1}^{n}}
\overline{\left\{  \left[  a_{i}\right]  \right\}  }=EC(R^{\#})$ which implies
that $%
{\displaystyle\bigcap\limits_{i=1}^{n}}
\overline{\left\{  \left[  a_{i}\right]  \right\}  }^{C}=\emptyset.\ $Now, put
$x_{m}=a_{1}^{m}+a_{2}a_{3}\cdots a_{n}$.\ Since $\left\vert u(R)\right\vert $
is finite, $\{x_{m}:m\in%
\mathbb{N}
\}$ contains a nonunit element, say $x_{t}=a_{1}^{t}+a_{2}a_{3}\cdots a_{n}$.
On the other hand, by Proposition \ref{pclosure}, we have $\overline{\left\{
\left[  a_{i}\right]  \right\}  }^{C}=\{[b]\in EC(R^{\#}):a_{i}$ does not
divide $b\}.$ This gives $x_{t}=a_{1}^{t}+a_{2}a_{3}\cdots a_{n}\in%
{\displaystyle\bigcap\limits_{i=1}^{n}}
\overline{\left\{  \left[  a_{i}\right]  \right\}  }^{C}$ which is a
contradiction. Hence, $R$ has infinitely many prime elements.
\end{proof}

As an immediate consequence of the previous theorem, we have the following result.

\begin{corollary}
The ring $%
\mathbb{Z}
$ of integers has infinitely many primes.
\end{corollary}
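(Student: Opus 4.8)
The plan is to specialize Theorem \ref{infty} to the case $R=\mathbb{Z}$, so the proof reduces to checking that $\mathbb{Z}$ satisfies all the hypotheses of that theorem. First I would recall that $\mathbb{Z}$ is a principal ideal domain, hence a UFD, and it is certainly not a field since, e.g., $2$ is a nonzero nonunit. So the only remaining hypothesis to verify is the cardinality condition $\lvert u(\mathbb{Z})\rvert < \lvert \mathbb{Z}\rvert$.

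This last point is immediate: the unit group of $\mathbb{Z}$ is $u(\mathbb{Z})=\{1,-1\}$, which has exactly two elements, whereas $\mathbb{Z}$ is infinite (it contains the countably infinite set $\mathbb{N}$). Thus $\lvert u(\mathbb{Z})\rvert = 2 < \aleph_0 = \lvert \mathbb{Z}\rvert$, and the hypothesis is satisfied. Applying Theorem \ref{infty} with $R=\mathbb{Z}$ then yields that $\mathbb{Z}$ has infinitely many prime elements. Finally, since the prime elements of $\mathbb{Z}$ are precisely the numbers $\pm p$ with $p$ a prime number in the usual sense, the infinitude of prime elements of $\mathbb{Z}$ is equivalent to the infinitude of the ordinary primes.

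There is really no obstacle here — the corollary is a one-line application of the main theorem, and the only thing worth spelling out is the verification of the three hypotheses (UFD, not a field, $\lvert u(R)\rvert < \lvert R\rvert$). I would present it in two or three sentences: state that $\mathbb{Z}$ is a UFD which is not a field, note that $u(\mathbb{Z})=\{\pm 1\}$ is finite while $\mathbb{Z}$ is infinite, invoke Theorem \ref{infty}, and conclude. If one wanted to be slightly more self-contained one could also observe that the argument in the proof of Theorem \ref{infty} becomes especially concrete over $\mathbb{Z}$: assuming $a_1,\dots,a_n$ were all the primes, the element $x_t = a_1^t + a_2 a_3 \cdots a_n$ is divisible by none of them for suitable $t$ (indeed for every $t\geq 1$ here, since $x_t>1$), giving the classical Euclid-style contradiction — but for the corollary it suffices simply to cite the theorem.
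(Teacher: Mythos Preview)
Your proposal is correct and matches the paper's intent exactly: the paper presents this corollary without proof, noting only that it is an immediate consequence of Theorem~\ref{infty}, and your verification of the three hypotheses (UFD, not a field, $|u(\mathbb{Z})|=2<|\mathbb{Z}|$) is precisely what is needed to justify that invocation.
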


\begin{acknowledgement} On behalf of all authors, the corresponding author states that there is no conflict of interest.
\end{acknowledgement}

\end{document}